\documentclass[12pt]{amsart}
\usepackage{geometry}                
\usepackage{graphicx}
\usepackage{amssymb}

\usepackage{amsthm}
\usepackage{amscd}
\usepackage[all]{xy}
\usepackage{pinlabel}
\usepackage[abs]{overpic}
\usepackage{comment}
\usepackage{subcaption}
\usepackage[dvipsnames]{xcolor} 
\usepackage{ulem}

\newcommand{\Z}{\mathbb{Z}}

\newcommand{\mL}{\mathcal{L}}
\newcommand{\tmL}{\widetilde{\mathcal{L}}}
\newcommand{\mC}{\mathcal{C}}
\newcommand{\mB}{\mathcal{B}}

\newcommand{\mP}{\mathcal{P}}
\newcommand{\tmP}{\widetilde{\mathcal{P}}}

\DeclareMathOperator{\Arf}{Arf}
\DeclareMathOperator{\lk}{lk}

\DeclareMathOperator{\sgn}{sgn}
\DeclareMathOperator{\Ker}{Ker}

\theoremstyle{theorem}
\newtheorem{thm}{Theorem}[section]
\newtheorem{prop}[thm]{Proposition}
\newtheorem{cor}[thm]{Corollary}
\newtheorem{lem}[thm]{Lemma}

\theoremstyle{definition}
\newtheorem{dfn}[thm]{Definition}

\theoremstyle{remark}
\newtheorem{remark}[thm]{Remark}

\begin{document}

\begin{abstract}
Donald and Owens introduced two link concordance groups with a marked component 
and showed that they contain the knot concordance group as a direct summand with infinitely generated complements. While not explicitly posed by Donald and Owens, the problem of determining the structure of these complements arises naturally from their work. 
In this paper, we completely resolve this problem by proving that both complements are isomorphic to 
\(\mathbb{Z}^{\infty} \oplus (\mathbb{Z}/2\mathbb{Z})^{\infty}\).
Moreover, we introduce a notion of prime element and establish a unique prime decomposition theorem. 
This yields a canonical normal form, providing a complete description of the group structure.
\end{abstract}

\title{Structure and unique factorization in concordance groups of links}

\author{Kouki Sato}
\address{Department of Mathematics, Meijo University, 1-501 Shiogamaguchi, Tempaku-ku, Nagoya 468-8502, Japan}
\email{satokou@meijo-u.ac.jp}

\author{Akira Yasuhara}
\address{Faculty of Commerce, Waseda University, 1-6-1 Nishi-Waseda,
  Shinjuku-ku, Tokyo 169-8050, Japan}
	 \email{yasuhara@waseda.jp}

\maketitle

\section{Introduction}\label{sec1}
A {\it partly oriented link} is a link in $S^3$ with a marked oriented component and the remaining components unoriented.
A {\it marked oriented link} is an oriented link in $S^3$ with a marked component.
If $L$ and $L'$ are  partly oriented links (resp.\ marked oriented link), then the connected sum $L \# L'$ with respect to the marked components is well-defined and commutative up to isotopy, and also regarded as a partly oriented link 
(resp.\ marked oriented link). In addition, 
the mirror $L^*$ and the orientation reversal  $-L$ of $L$
are also regarded as a partly oriented link (resp.\ marked oriented link).
On such objects, Donald-Owens \cite{DO} defined the following equivalence relation.

\begin{dfn}[\text{\cite[Definition 2]{DO}}]
\label{dfn:DO}
Let $L_0$ and $L_1$ be partly oriented links (resp.\ marked oriented links). 
We say $L_0$ and $L_1$ are 
{\it $\chi$-concordant}
if there is  
a smoothly,  properly embedded compact surface $F$ in the 4-ball $B^4$ 
without closed components, and with Euler characteristic $\chi(F)=1$ such that
\begin{itemize}
\setlength{\itemsep}{2mm} 
\item
$F$ consists of a single disk $D$ and unoriented surfaces (resp. oriented surfaces)
that are not disks,
\item $\partial F=-L^*_0 \# L_1$, and $\partial D$ is the marked component of $-L^*_0 \# L_1$.
\end{itemize}
\end{dfn}

In the definition above, since $\chi(F\setminus D)=0$,  we note that 
$F\setminus D$ consists of  annuli and/or M\"{o}bius bands in  the partly oriented case, 
and  of oriented annuli in the marked oriented case.

We denote by $\mL$ (resp.\ $\tmL$) the set of  $\chi$-concordance classes of partly oriented links (resp.\ marked oriented links), and call them the {\it $\chi$-concordance groups}. 
As proved in \cite{DO}, 
$\mL$ and $\tmL$ form abelian groups under connected sum respectively. 
Here, the trivial element $0$ in both $\mL$ and $\tmL$ is represented by the trivial knot.
Every trivial link represents $0$ in $\mL$. In contrast, a trivial link represents
$0$ in $\tmL$ if and only if the number of its components is odd, since any two marked
oriented links in the same class of $\tmL$ have numbers of components congruent modulo $2$.
Since a link $L$ is $\chi$-concordant to $K\#((-K)^*\#L)$,  
the equivalent class $[L]$ is equal to 
$[K]+[(-K)^*\#L]$ in each of the $\chi$-concordance groups $\mL$ and $\tmL$.
It is shown in \cite{DO} that this observation induces direct sum decompositions of $\mL$ and $\tmL$: 
\[
\mL = \mC \oplus \mL_0  \quad \text{and} \quad
\tmL = \mC \oplus \tmL_0, 
\]
where $\mC$ denotes the knot concordance group and $\mL_0$ and $\tmL_0$ consist of $\chi$-concordance classes of links whose marked component is a slice knot.
This implies that the intrinsic properties of $\mL$ (resp.\ $\tmL$) are concentrated in $\mL_0$ (resp.\ $\tmL_0$).

Since $\mL_0$ and $\tmL_0$ still contain classes of links with non-slice 
unmarked components, it appears very difficult to determine these groups, 
as determining $\mC$ is a widely open problem. 
Surprisingly, we obtain the following unexpected result, which determines 
their isomorphism classes.

\begin{thm}[Structure Theorem]
\label{thm:main}
Both $\mL_0$ and $\tmL_0$ are isomorphic to $\Z^\infty \oplus (\Z/2\Z)^\infty$. 
\end{thm}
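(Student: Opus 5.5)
The plan is to build, for each of $\mL_0$ and $\tmL_0$, a complete system of concordance invariants together with a family of explicit generators. I would then check that the invariants send the generators to a basis of $\Z^\infty\oplus(\Z/2\Z)^\infty$. Several steps below are guesses that I would need to verify.

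\textbf{Step 1 (reduction).} I would show that every class in $\mL_0$ (resp.\ $\tmL_0$) is a sum of classes of links of a very restricted form. The aim is to reduce to links whose marked component is the unknot and which have a single unmarked component, or a small number of them.

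1. Since the marked component is slice, it bounds a disk $D$ in $B^4$. Tubing unmarked components along arcs in the complement of $D$ gives $\chi$-concordances. The hope is that these let us split a link with several unmarked components into a connected sum of links, each with one unmarked component. For this I would use the identity $[L]=[K]+[(-K)^*\#L]$ from the introduction, applied after suitable band moves.

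2. I would then try to show that the knot type of an unmarked component $J$, and its local knotting, can be absorbed. The idea is that $J$ and a parallel copy of $J$ with the appropriate orientation cobound an annulus. So one can trade $J$ for an unknotted curve in the same homology class of the complement of the marked component, at the cost of adding pairs of components that cancel.

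The expected conclusion is that $\mL_0$ and $\tmL_0$ are generated by classes $T_n$. Here $T_n$ is the marked unknot together with one unknotted unmarked component that links it $n$ times, possibly together with a few variants carrying a mod-$2$ twisting datum.

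\textbf{Step 2 (invariants).} Next I would define homomorphisms out of $\mL_0$ and $\tmL_0$. For each unmarked component $J$, record $\lk(J,\text{marked component})$; in the partly oriented case this is only defined up to sign. For each value $n$, take a signed or parity count of the components realizing it. These counts should be invariant under $\chi$-concordance. The reason is that each annulus or M\"obius band of $F\setminus D$ pairs boundary components whose linking numbers with $\partial D$ agree, up to the sign dictated by orientability, or it doubles a linking number. I would also add an $\Arf$-type mod-$2$ invariant for the components that bound M\"obius bands; this is where the $(\Z/2\Z)^\infty$ summand should come from.

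\textbf{Step 3 (independence).} I would show that these invariants send the generators $T_n$ and their variants to infinitely many independent free elements and infinitely many independent order-two elements. Combined with Step 1, this identifies both groups with $\Z^\infty\oplus(\Z/2\Z)^\infty$.

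\textbf{Main obstacle.} I expect Step 1 to be the hard part. The unmarked components may themselves be non-slice and may link each other in complicated ways, and the goal is to show that none of this survives in $\chi$-concordance. My first attempt would be an explicit surface construction. Take the slice disk $D$ for the marked component. Then build annuli joining each unmarked component $J$ to a standard model curve with the same linking number with the marked component, by pushing a Seifert-type surface for $J$ into the complement of $D$ and surgering it. The delicate issue is doing this disjointly for all unmarked components at once.
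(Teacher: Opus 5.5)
Your Step~1 is not just unproven. Its conclusion is false, and that sinks the plan of classifying $\mL^*_0$ by a complete system of invariants. The tubing move in Step~1.1 is not available: every non-disk piece of a $\chi$-concordance has $\chi=0$, and a band or tube joining two unmarked components creates a piece with $\chi<0$. The paper's own results also give direct counterexamples to the reduction.

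(i) For any knot $K$, the split link $O\sqcup K$ has two components. So in $\tmL_0$ it represents a nonzero class (component parity) and is a minimizer. By Lemma~\ref{lem:minimal2}, $[O\sqcup K]=[O\sqcup K']$ forces a marked concordance, so $K$ and $K'$ are concordant. Hence knotting of unmarked components cannot be absorbed, and a complete invariant of $\tmL_0$ would decide knot concordance.

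(ii) The Borromean rings $B=B_1$ are a $3$-component minimizer representing a prime (Propositions~\ref{prop:Brunnian} and~\ref{prop:B}), so $[B]$ has no $2$-component representative. By Theorem~\ref{thm:structure}, a prime lies in the subgroup generated by a set of primes only if it equals one of them up to sign. So $[B]$ is not a sum of classes of $2$-component links, and unmarked components cannot be split off one at a time.

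(iii) The Whitehead link has unknotted components and $\lk=0$, yet it has infinite order in both groups; it is detected by $\lambda_W(11,12)=-2$ in the double branched cover over $O$. Invariants built from linking numbers with $O$ and from knot invariants of single components cannot tell it from the $2$-component trivial link. Nor can they tell the Borromean rings from the $3$-component trivial link, which is $0$. In $\mL_0$ your linking-number counts are also only $\Z/2\Z$-valued, since annuli pair components with equal $|\lk|$ and M\"obius bands absorb even $|\lk|$. So they cannot produce the $\Z^\infty$ there.

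What is missing is the paper's key idea: you never need to identify the elements of $\mL^*_0$, only the shape of the relations, and that is controlled by minimizing the number of components. A $\chi$-concordance between two minimizers has no piece bounded only by unmarked components on one side, so it is a marked concordance (Lemmas~\ref{lem:minimal1} and~\ref{lem:minimal2}). Consequently a connected sum of two prime minimizers is again a minimizer when nonzero (Lemma~\ref{lem:sum1}). It follows that a sum of primes, no two of which add to $0$, is nonzero (Proposition~\ref{lem:sum2}). Combine this with an induction on the number of components showing that every element is a sum of primes. The result is Theorem~\ref{thm:structure}: primes have order $2$ or $\infty$, and the order-$2$ primes together with one of $\pm x$ for each infinite-order prime $x$ form a basis. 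Hence $\mL^*_0\cong\Z^\alpha\oplus(\Z/2\Z)^\beta$ without knowing what the primes are. Invariants are then needed only to show $\alpha=\beta=\infty$, i.e.\ to exhibit infinitely many pairwise distinct primes of each order. The paper uses $W_{2k-1}$ and $B_{2k-1}$, distinguished by the lifted linking numbers $\lambda$ and the M\"obius-band obstruction of~\cite{Yasuhara}. Your observation that annuli pair components with opposite linking numbers with $\partial D$ is correct and gives $\Z$-valued homomorphisms on $\tmL_0$, so it can help with that last step. Steps~1 and~3 as planned cannot be carried out.
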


In \cite{DO}, it is shown that both $\mL_0$ and $\tmL_0$ are infinitely generated 
and contain subgroups isomorphic to $\Z^\infty \oplus (\Z/2\Z)^\infty$.
However, it remained completely open to determine the isomorphism classes of these groups.
For example, it was unknown which orders of elements occur in these abelian groups.
Theorem~\ref{thm:main} settles these problems completely.

In the following, the notation $\mL_0^*$ denotes either $\mL_0$ or $\tmL_0$,
and {\it marked links} (or simply {\it links})
refers either partly oriented links or marked oriented links.
We assume that the marked component of any marked link is slice knot.

For proving Theorem~\ref{thm:main}, we introduce the {\it primeness} of elements in $\mL^*_0$. 
An element $x$ in $\mL^*_0$ is {\it non-prime} if $x$ can be realized by the connected sum $L_1 \# L_2$ of two marked links $L_1$ and $L_2$ such that
\begin{itemize}
\setlength{\itemsep}{2mm}
\item each $L_i$ has at least two components, including a marked slice component, and
\item the number of components of $L_1 \# L_2$ is minimal among all representatives of $x$.
\end{itemize}
An element $x \in \mL^*_0$ is {\it prime} if $x$ is neither zero nor non-prime. 
We stress that a non-trivial element $x\in \mL^*_0$ containing a 2-component link 
is prime. A remarkable consequence of defining 
primeness is the following.
 
\begin{thm}[Prime Decomposition Theorem]
\label{thm:prime}
Any non-zero element $x \in \mL^*_0$ can be written in the form $x = \sum_{i=1}^n k_i x_i$ where $x_i$ is prime,
$k_i x_i \neq 0$ and $x_s \neq \pm x_t$ for any $1 \leq i \leq n$ and $1 \leq s < t \leq n$.
Moreover, the form  is uniquely determined up to order of terms. 
\end{thm}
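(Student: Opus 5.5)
The plan is to control everything through the complexity $c(x)$, the minimal number of components of a representative of $x$, and through the structure of $\chi$-concordances between minimal representatives. Since the marked component is slice, $c(x)=1$ iff $x=0$. The first step is a \emph{rigidity lemma}. Let $L,L'$ be minimal representatives of the same class and $F$ a $\chi$-concordance between them. Then no annulus (or M\"obius band) of $F\setminus D$ has both boundary circles on components coming from the same one of $L,L'$, and no M\"obius band occurs at all. I would prove this by contradiction. If such a surface component existed, it would give a cap for a pair of components of $L$ (or a single component, in the M\"obius case). Tubing along it, or simply deleting those components from $L$ together with the surface piece, would produce a representative of the same class with fewer components, contradicting minimality. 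Counting boundary circles then shows that the annuli induce a bijection between the unmarked components of $L$ and those of $L'$.

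The second step is the \emph{restriction principle}. For a subset $S$ of unmarked components of $L$, write $L_S$ for the sublink formed by the marked component and $S$. Keeping only the disk $D$ and the annuli attached to $S$ gives a $\chi$-concordance from $L_S$ to $L'_{\sigma(S)}$, where $\sigma$ is the bijection from the first step. So the classes $[L_S]$ are invariants of $x$ once $S$ is transported by $\sigma$. Call $S$ \emph{splitting} if $x=[L_S]+[L_{S^c}]$ and $c([L_S])+c([L_{S^c}])-1=c(x)$. By restriction, this notion is independent of the minimal representative. I then want to show that splitting sets are closed under intersection and complement. The method is to apply restriction to $L_S\# L_{S^c}$ and to a second splitting $L_T\# L_{T^c}$, and compare the two resulting decompositions of the sublinks indexed by $S\cap T$ and its complement.

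Granting this, there is a unique finest splitting partition $\{S_1,\dots,S_m\}$ of the unmarked components. Each block gives a class $y_j=[L_{S_j}]$. Each $y_j$ is prime: a nontrivial decomposition of $y_j$ into two pieces with additive complexity would refine the partition. Moreover $x=\sum_j y_j$ with $c(x)-1=\sum_j (c(y_j)-1)$. Existence then follows by collecting equal primes into $k_ix_i$. If some $x_s=-x_t$ occurred, the two terms would cancel and give a representative with fewer components, contradicting additivity. By the same argument $k_ix_i\neq0$, and for $2$-torsion primes $k_i=1$.

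For uniqueness, suppose $x=\sum k_ix_i$ in the stated form. Take minimal representatives of the $x_i$ and form their connected sum with multiplicities. I first show this sum is a \emph{minimal} representative of $x$; this is the additivity $c(\sum k_ix_i)-1=\sum |k_i|(c(x_i)-1)$. Here I would use rigidity: a smaller representative would force an annulus joining two components of the sum. That annulus would restrict, via the restriction principle, to a cancellation between two summands, and the hypotheses $x_s\neq\pm x_t$ and $k_ix_i\neq0$ rule this out. Once this is known, the blocks of the sum are splitting sets consisting of prime classes, so they form the finest partition. The multiset of primes is therefore an invariant of $x$, up to the sign ambiguity $k x_i=(-k)(-x_i)$, which is absorbed in ``up to order''. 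The main obstacle I expect is exactly this additivity, together with the intersection-closure of splitting sets. Both require knowing that annuli in a concordance between connected sums cannot ``mix'' components of different summands in a way that changes the classes of sublinks. I would attack this first by arguing with the restricted concordances of sublinks and the component-count parity invariant.
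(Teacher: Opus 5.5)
Your rigidity lemma and restriction principle are essentially Lemmas~\ref{lem:minimal1} and~\ref{lem:minimal2}. There is a genuine gap, though: the step that carries uniqueness, additivity of complexity for a sum of primes, is only asserted, and your sketch of it does not work.

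\emph{Restriction cannot be used here.} The restriction principle needs minimality at both ends, and minimality of $N=\#_iL_i$ is exactly what is unknown.

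\emph{An annulus between summands is not a cancellation.} Take $x_1=p+q$ and $x_2=-q$ with primes $p\neq\pm q$, represented by $P\#Q$ and $-Q^*$. The obvious concordance from $P$ to $P\#Q\#(-Q^*)$ has annuli joining the unmarked components of $Q$ to those of $-Q^*$, although $x_1\neq\pm x_2$. So the inference from such an annulus to a cancellation is false in general. Primeness must be used somewhere, and your sketch never uses it.

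What such an annulus $C$ really gives is this: regard $F$ as a $\chi$-concordance from $M\#(\#_{k\neq i,j}(-L_k^*))$ to $L_i\#L_j$ and delete $C$. This shows that $L_i\#L_j$ is not a minimizer.

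The missing idea is Lemma~\ref{lem:sum1}: if $L_i,L_j$ are minimizers of primes with $x_i+x_j\neq 0$, then $L_i\#L_j$ is a minimizer. Its proof is where primeness enters.
\begin{itemize}
\item Take a smaller minimizer $L$ and a concordance from $L$ to $L_i\#L_j$.
\item Lemma~\ref{lem:minimal1}, applied to each of $L$, $L_i$, $L_j$ via the obvious reinterpretations of $F$, forces every annulus to join two different ones among $\check{L},\check{L_i},\check{L_j}$.
\item Some annulus joins $\check{L}$ to, say, $\check{L_i}$, and counting components yields another joining $\check{L_i}$ to $\check{L_j}$.
\item Keeping $D$ and the annuli meeting $\check{L_i}$ represents $x_i$ by a $c(L_i)$-component connected sum of a sublink of $L$ and the mirror-reverse of a sublink of $L_j$. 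Each piece has at least two components, which contradicts primeness of $x_i$.
\end{itemize}
With this lemma, your additivity claim does follow from the reinterpretation above. Its special case with $M$ the unknot is Proposition~\ref{lem:sum2}. That already gives uniqueness once both decompositions are moved to one side.

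The splitting-set lattice is both unproven and unnecessary. The restriction principle transports sublinks only along the bijection $\sigma$ induced by a particular concordance. Nothing forces $\sigma$ to be the identity, even for a concordance from $L$ to $L_S\#L_{S^c}$. Comparing with a second splitting $T$ therefore says something about $L_{\sigma(T)\cap S}$, not about $L_{S\cap T}$. The same issue blocks your claim that a splitting of a block $L_{S_j}$ refines the partition of $L$.

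Because your existence argument rests on this lattice, it is incomplete as written. It is easily repaired as in the paper: induct on $c(x)$. If $x$ is non-prime with minimal representative $L_1\#L_2$, each $[L_i]$ is nonzero with $c([L_i])<c(x)$. Then collect terms and discard those with $k_ix_i=0$.

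Finally, before forming the connected sum ``with multiplicities'' in the uniqueness step, replace $k_i$ by $1$ for primes of order $2$. Otherwise two copies of such a summand cancel and the sum is not minimal.
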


In the next section, we show that any prime element has order 2 or $\infty$ 
(Corollary~\ref{cor:prime}). These already imply that 
$\mL^*_0 \cong \Z^{\alpha} \oplus (\Z/2\Z)^{\beta}$ for some cardinalities $\alpha, \beta$.
(Note that since the set of the isotopy classes of marked links is countable, $\alpha$ and $\beta$ are at most countable.)
To prove $\alpha = \beta = \infty$, one needs to find an infinite family of mutually distinct prime elements in 
$\mL^*_0$. 
Since $\mL^*_0$ still contains links with knotted unmarked components, it is not hard to see 
 $\alpha = \beta = \infty$ by adding local knots to unmarked components. 
 Roughly speaking, the following approach can be made: 
 \begin{itemize}
\item (for $\alpha=\infty$)~ Let $K$ be a knot with infinite order in $\mC$. 
A link  obtained from the Hopf link by adding $K$ to the unmarked component
 is prime and infinite order in $\mL^*_0$. 
 \item(for $\beta=\infty$)~
 Since it is known in \cite{Yasuhara} that any knot with trivial signature and non-trivial Arf invariant does not 
 bound a M\"{o}bius band in the 4-ball, 
we have infinitely many knots with order 2 in $\mC$ 
that do not bound a M\"{o}bius band in the  4-ball. 
The split union of the marked trivial knot and such a knot 
is prime and has order 2 in $\mL^*_0$.  
 \end{itemize}
 
It follows that, by considering local knots, we can prove Theorem~\ref{thm:main} more simply than in the proof presented in this article. 
On the other hand, by avoiding local knots, we can observe how our objects are linked without the \lq noise' introduced by knots. 
This suggests that investigating links with trivial components is a reasonable approach. 
Furthermore, by focusing on {\it Brunnian links}, we gain deeper insight into the structure of such links.
Here, a {\it Brunnian link} $L$ is a non-trivial link which becomes trivial if any component is removed. 

Two $n$-component marked links $L_0 \subset S^3 \times \{0\}$ and $L_1 \subset S^3 \times \{1\}$ with marked components $K_0$ and 
$K_1$ respectively are {\it marked-concordant} if there
 is a disjoint union $A$ of annuli $A_1,...,A_n$ in $S^3\times[0,1]$ such that 
$\partial A_j \cap (S^3 \times \{i\}) \neq \emptyset$ 
$(i=0,1,~j=1,\dots,n)$, $A_1$ is oriented with $\partial A_1 = K_0 \cup (-K_1^*)$, 
and the following conditions hold:
\begin{itemize}
\setlength{\itemsep}{2mm}
\item In the partly oriented case,  $A\setminus A_1$ is unoriented and 
$\partial (A\setminus A_1) = (L_0\setminus K_0) \cup (L_1\setminus K_1)^*$.
\item In the marked oriented case, 
$\partial A = L_0 \cup (-L_1^*)$.
\end{itemize}

Brunnian links have the following property.

\begin{prop}
\label{prop:Brunnian}
If a Brunnian link $L$ is not equal to any trivial link in $\mL^*_0$, then $L$ represents a prime element in $\mL^*_0$.
Moreover, if another Brunnian link $L'$ also represents the same element, then $L$ and $L'$ are 
marked-concordant.
\end{prop}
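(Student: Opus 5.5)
Both parts will follow from one geometric lemma about minimal representatives, which I expect the paper to develop before this proposition: \emph{if $L$ and $L'$ represent the same element of $\mL^*_0$ and $L$ has the minimal number of components among all representatives, then $L'$ is obtained from a link marked-concordant to $L$ by adding components that cobound annuli or M\"obius bands.} More precisely, I would take a $\chi$-concordance $F$ between $L$ and $L'$ and regard it as a cobordism in $S^3\times[0,1]$ built from the disk $D$ and the non-disk pieces of $F\setminus D$. I would put $F$ in Morse position and analyze the pieces. An annulus or M\"obius band with both boundary circles on the same side can be removed, which changes the number of components by an even number or by one. Minimality of $L$ then forces every component of $L$ to be joined to exactly one component of $L'$ by an annulus. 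The remaining components of $L'$ are grouped in pairs by annuli, or occur singly bounding M\"obius bands, on the $L'$ side only.

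For the first assertion, let $L$ be a Brunnian $n$-component link that is not equal to any trivial link in $\mL^*_0$. First I would show that $L$ itself is a minimal representative. Suppose $L'$ represents $[L]$ with fewer components. Run the surface analysis above with the roles reversed. Some component of $L$ is not joined to $L'$ by an annulus, so it is capped off, together with another component of $L$, by an annulus or M\"obius band lying in the $L$ side. Removing one component of a Brunnian link leaves a trivial link. I would then argue that $[L]$ equals the class of a trivial link: the sublink left after deleting the capped components is trivial and carries the rest of the concordance. This contradicts the hypothesis.

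Next suppose $[L]$ is non-prime, realized by $L_1\#L_2$ with each $L_i$ having at least two components and the total number minimal, hence equal to $n$. Applying the lemma gives an $n$-component link marked-concordant to $L$ that splits as a connected sum along the marked component. I would then use that the Brunnian property is detected by marked-concordance invariants, for instance Milnor-type or sublink-triviality-up-to-concordance behaviour. A nontrivial split of this kind forces some proper sublink, obtained by deleting a component of $L_2$, to remain nontrivial in the class. That is impossible, because deleting any one component of $L$ gives a trivial link. So $[L]$ is prime.

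For the second assertion, let $L$ and $L'$ be Brunnian links representing the same element. Both are minimal by the first step, so they have the same number of components. The lemma then gives a concordance pairing components by annuli, with $D$ pairing the marked components, and this is exactly a marked-concordance. The main obstacle is the lemma's surface analysis, specifically showing that annuli or bands with both ends on one side can be eliminated without destroying the disk-component condition. Making the Brunnian-to-trivial reduction rigorous in the partly oriented, unoriented-band case is delicate.
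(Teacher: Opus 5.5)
Your first and third steps are essentially the paper's argument: by counting, some component $C$ of $F\setminus D$ has $\partial C\subset\check L$, and $L\setminus\partial C$ is a trivial link; and two minimizers are marked-concordant. The ``obstacle'' you flag is not one. Since $\chi(F\setminus D)=0$ and $F\setminus D$ has no disk or closed components, every component is already an annulus or M\"obius band. Deleting one whose boundary lies on a single side leaves a valid $\chi$-concordance, with the disk untouched and $\chi$ unchanged. This holds in both the marked oriented and partly oriented cases, and no Morse position is needed.

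The gap is in the primeness step. You assert that the split ``forces some proper sublink, obtained by deleting a component of $L_2$, to remain nontrivial in the class'', but give no reason. Milnor-type invariants are neither needed nor what makes this true. Deleting a single component of $\check{L_2}$ is also the wrong operation when $\check{L_2}$ has several components: it only tells you that $[L_1]+[L_2']$ is a trivial-link class for some sublink $L_2'\subset L_2$, which contradicts nothing.

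The missing idea is that a marked-concordance restricts to sublinks, combined with additivity. Delete \emph{all} of $\check{L_2}$ (resp.\ $\check{L_1}$). The marked-concordance from $L_1\#L_2$ to $L$ then restricts to a marked-concordance from $L_1\#K_2$ (resp.\ $K_1\#L_2$) to a proper sublink of $L$, where $K_i$ is the slice marked component of $L_i$. That proper sublink is trivial because $L$ is Brunnian. Hence $[L_1]=[L_1\#K_2]$ and $[L_2]=[K_1\#L_2]$ are both classes of trivial links. So $[L]=[L_1]+[L_2]$ is the class of a connected sum of trivial links, which is itself a trivial link, contradicting the hypothesis. This additivity is the only reason ``some proper sublink remains nontrivial'', and it is what you need to state in place of the appeal to Milnor invariants.
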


As concrete examples, we consider Brunnian links $W_n$ and $B_n$ shown in Figure~\ref{Wn_and_Bn}, and prove the following.

\begin{prop}
\label{prop:example}
For any odd integer $n > 0$, the marked oriented link $W_n$ (resp.\ $B_n$) is not equal to any trivial link and has order $\infty$ (resp.\ order 2) in $\mL^*_0$. Moreover, 
$W_{2k-1}~(k \in \mathbb{N})$ (resp.\ $B_{2k-1}~(k \in \mathbb{N})$) are mutually distinct up to $\chi$-concordance. 
\end{prop}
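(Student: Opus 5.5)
We need to prove Proposition~\ref{prop:example}. We have not seen the figure, but presumably $W_n$ is a Whitehead-type Brunnian 2-component link with $n$ twists (marked component unknotted) and $B_n$ some Bing-double-type Brunnian link. Since the paper notes that two marked links in the same class of $\tmL$ have the same parity of component counts, and both are 2-component links, the point is to find invariants of $\chi$-concordance that detect them.

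The plan is to use an invariant of the unmarked part coming from the slice surface. Suppose $L$ is a 2-component marked link with marked component $K$ slice and unmarked component $J$. If $L$ is $\chi$-concordant to a trivial link, then there is a surface in $B^4$ consisting of a disk $D$ bounded by $K$ and a union of annuli and M\"obius bands whose boundary contains $J$. Equivalently, $J$ together with some unknotted components bounds such surfaces in the complement of the disk $D$.

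I would then pass to the double branched cover of $B^4$ along $D$, or to the infinite cyclic cover of the complement of $D$. In either cover the lift of $J$ bounds the lifted surfaces, and I would extract a signature or Arf-type obstruction that is additive under connected sum. The natural candidate is the following: take the preimage of $J$ in the 2-fold branched cover $\Sigma_2(K)=S^3$ of the unknot $K$. For a Whitehead-type link with $K$ unknotted, this preimage is a knot or a 2-component link in $S^3$ (the twisted double-type lift). Its concordance invariants (signature, Arf) then obstruct bounding annuli or M\"obius bands that pair it with trivial stuff.

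The key steps are:
\begin{itemize}
\item Define the homomorphism $\mL^*_0\to$ (some group built from knot concordance mod M\"obius-band/annulus relations) by lifting the unmarked components to the branched cover over the marked slice disk, and check that it is well defined.
\item Compute the lift for $W_n$ and for $B_n$. For $n$ odd, I expect the lift of $W_n$ to be a twist knot or torus knot with signature growing in $n$. That gives infinite order, and distinct $k$ give distinct values.
\item For $B_n$, I expect the lift to be amphichiral with Arf invariant $1$. This gives order $2$ via the Yasuhara M\"obius obstruction, and $B_n$ has order exactly $2$ because it is isotopic to $-B_n^*$.
\end{itemize}

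The main obstacle is well-definedness: the slice disk for the marked component varies, so the cover varies. I would try to restrict to an invariant depending only on the linking data, such as Milnor/Sato-Levine-type invariants $\bar\mu(1122)$, or a Cochran derived invariant. Those are concordance invariants of 2-component links with linking number zero, and I would prove they survive the extra annulus/M\"obius components modulo $2$ or exactly.

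Concretely, I would first try the Sato-Levine invariant $\beta$. For the oriented case, annuli capping unknotted pairs should preserve $\beta$, which makes it additive; $\beta(W_n)$ should grow with $n$. In the partly oriented case I would use $\beta \bmod 2$, or a signature, which is orientation-independent on $J$.
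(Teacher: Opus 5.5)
Your proposal does not yet contain a proof. No invariant is actually fixed, its invariance is never established, and the step you yourself flag as the main obstacle is left open: well-definedness of an additive invariant, i.e.\ a homomorphism out of $\mL^*_0$. The target group is never defined. The Sato--Levine invariant $\beta$ is only defined for 2-component links, so ``annuli preserve $\beta$, hence it is additive'' has no content once the unmarked part has several components.

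The missing idea is that no homomorphism is needed. Section 2 supplies three reductions:
\begin{itemize}
\item a nonzero 2-component link (nonzero is automatic in $\tmL_0$ by parity of component counts), and a Brunnian link not equal to a trivial link, are prime minimizers (Proposition~\ref{prop:Brunnian});
\item a $\chi$-concordance between minimizers is a marked concordance (Lemma~\ref{lem:minimal2});
\item a prime element has order $2$ or $\infty$ (Corollary~\ref{cor:prime}), so infinite order reduces to $[L]\neq[-L^*]$.
\end{itemize}
Hence an invariant of marked concordance suffices, and your worry about the cover varying disappears. One takes the double branched cover of $S^3\times[0,1]$ over the single annulus joining the marked unknots; this is a rational homology cobordism. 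There the two lifts of the annulus joining the unmarked components are disjoint, so the linking number $\lambda_L(11,12)$ of the two lifts of $J$ to $\Sigma_2(O)\cong S^3$ is preserved. Moreover $\lambda_{-L^*}=-\lambda_L$, and a Goeritz computation gives $\lambda_{W_n}(11,12)=-2n^2$ for $W_n=C_{n,1}(W)$.

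With these reductions in place, your $\beta$ could in fact play the role of $\lambda$ for $W_n$: it is a concordance invariant, changes sign under $L\mapsto -L^*$, and ignores the orientation of $J$. That is not, however, the argument you outline. Note also that since $\lk(O,J)=0$, the preimage of $J$ is a 2-component link, not a twist or torus knot. The relevant quantity is the linking number of its two components, not a signature.

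The nontriviality claims are also unsupported. In $\mL_0$, $[\rho(W_n)]=0$ means that $J$ bounds a M\"obius band disjoint from a disk bounded by $O$. Both components of $W_n$ are unknots, so invariants of $J$ alone vanish, and you give no reason why $\beta \bmod 2$ or ``a signature'' would obstruct this. The paper instead uses a planar cobordism from $W_n$ to a twist knot $K_n$ with $\sigma(K_n)=0$ and $\Arf(K_n)=1$ for odd $n$; this is where oddness of $n$ enters. It combines this with Yasuhara's theorem that such a knot bounds no M\"obius band in $B^4$ (Lemma~\ref{lem:2comp1}).

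For $B_n=O\cup P_n(K_1)\cup P_n^*(K_2)$, a 3-component Brunnian link, your 2-component tools do not apply. A $\chi$-concordance from the unknot must join the two unmarked components by an annulus, or, in $\mL_0$, may cap them off by two M\"obius bands. Every 2-component sublink is trivial, and ``the lift is amphichiral with Arf invariant $1$'' does not name an obstruction to either situation.

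The paper uses $\bar\lambda(1,2)=|\lk(K_{11},K_{21})|$. Lifting the annulus, resp.\ the M\"obius bands, to the rational homology ball $\Sigma_2(D)$ shows that $\bar\lambda(1,2)=0$ if $[B_n]=0$ in $\tmL_0$, and that $\bar\lambda(1,2)$ is even if $[\rho(B_n)]=0$ in $\mL_0$. Since $\bar\lambda_{B_n}(1,2)=n^2$ is odd, both are excluded. The same invariant, combined with the marked concordances from Proposition~\ref{prop:Brunnian}, gives the mutual distinctness of the $B_{2k-1}$, which your proposal does not address at all.
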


\begin{remark}
We note that $W_1$ and $B_1$ are the Whitehead link and the Borromean rings respectively.
In particular, Proposition~\ref{prop:example} answers to Donald-Owens' question \cite[Section 4]{DO} asking what the orders of $W_1$ and $B_1$ are.
\end{remark}

\begin{figure}[htbp]
\begin{center}
\includegraphics[width=.85\linewidth]{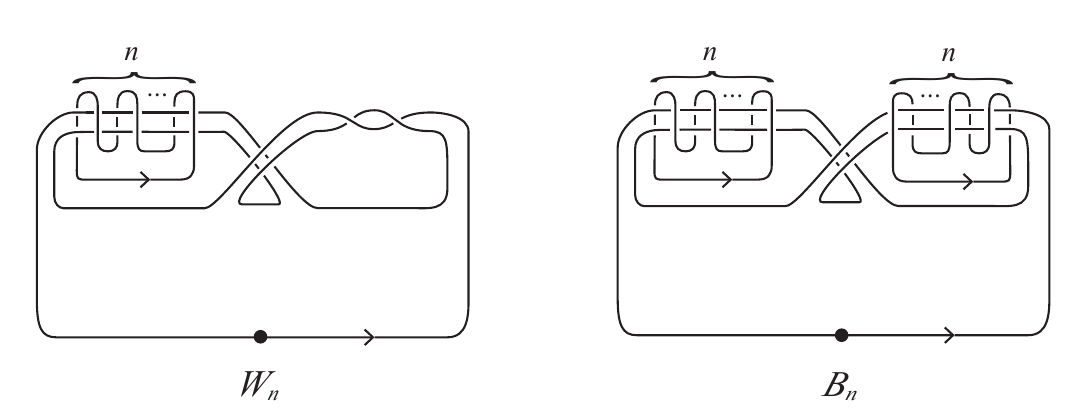}
\caption{Marked oriented links $W_n$ and $B_n$}\label{Wn_and_Bn}
\end{center}
\end{figure}

By ignoring the orientations on the unmarked component of a marked oriented link, 
we have the natural epimorphism $\rho: \tmL_0\longrightarrow \mL_0$. 
We also regard $\rho$ as a map from the set of marked oriented links to 
the set of partly oriented links.
The structure of $\ker \rho$ is described as follows.

\begin{thm}
\label{thm:ker}
The kernel of the projection $\rho:\widetilde{\mathcal{L}}_0 \to \mathcal{L}_0$ is isomorphic to 
$\mathbb{Z}^\infty \oplus (\mathbb{Z}/2\mathbb{Z})^\infty$. 
Furthermore, let $\mathcal{B}$ denote the set of nontrivial elements in $\widetilde{\mathcal{L}}_0$ represented by Brunnian links. 
Then $\widetilde{\mathcal{L}}_0$ contains the subgroup generated by $(\ker \rho) \cap \mathcal{B}$ as a direct summand, 
which is isomorphic to $\mathbb{Z}^\infty \oplus (\mathbb{Z}/2\mathbb{Z})^\infty$.
\end{thm}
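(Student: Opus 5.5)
We will derive the statement from the Prime Decomposition Theorem (Theorem~\ref{thm:prime}) and Proposition~\ref{prop:Brunnian}. The tool is a combinatorial description of $\tmL_0$. Let $\mP$ be a set of representatives of prime elements modulo $\pm$. Theorem~\ref{thm:prime}, together with the fact that every prime has order $2$ or $\infty$ (Corollary~\ref{cor:prime}), gives
\[
\tmL_0=\bigoplus_{p\in\mP}\langle p\rangle .
\]
Each summand is $\Z$ or $\Z/2\Z$. Indeed, uniqueness of the normal form forces any relation $\sum k_i p_i=0$ with distinct $p_i$ to be trivial in each coordinate.

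\textbf{Step 1: $\rho$ and primes.} I expect $\rho$ to be compatible with this decomposition in the following sense. Ignoring orientations of unmarked components does not change the number of components. A minimal representative in $\tmL_0$ should map to a minimal representative in $\mL_0$; I would check this by lifting an unoriented $\chi$-concordance that reduces components back to an oriented one, or show the reverse inequality directly. Granting this, $\rho$ sends a non-prime element to a non-prime element or to $0$. I would then show that $\rho(p)$ is prime or $0$ for prime $p$. Since fibers of $\rho$ on marked links consist of re-orientations of the unmarked components, $\rho$ induces a surjection from $\mP\cup\{0\}$ onto the primes of $\mL_0$ together with $0$, up to sign. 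Uniqueness of the normal form on both sides then gives
\[
\ker\rho=\Bigl\langle\, p\in\mP:\rho(p)=0\Bigr\rangle\ \oplus\ \Bigl\langle\, p-\varepsilon q:\ p\neq\pm q\in\mP,\ \rho(p)=\pm\rho(q)\neq 0\Bigr\rangle .
\]
Here $\varepsilon=\pm1$ is the appropriate sign, with an additional $2p$ or $p+p'$ correction when an infinite-order prime maps to an order-$2$ prime. The first summand is a direct summand of $\tmL_0$. Each generator type contributes a $\Z$ or a $\Z/2\Z$, so $\ker\rho$ is a direct sum of cyclic groups of order $2$ or $\infty$. It remains to show that both kinds occur infinitely often. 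This in fact already lives in the subgroup spanned by $(\ker\rho)\cap\mB$.

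\textbf{Step 2: Brunnian examples in the kernel.} By Proposition~\ref{prop:Brunnian}, each element of $\mB$ is prime. The subgroup generated by any set of such primes, pairwise not $\pm$ each other, is therefore the direct summand $\bigoplus\langle p\rangle$ of $\tmL_0$. So it suffices to exhibit infinitely many pairwise distinct, up to sign, Brunnian classes in $\ker\rho$ of infinite order, and infinitely many of order $2$.

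The natural candidates are differences coming from orientation choices, but differences are not Brunnian. Instead, I would take a marked oriented Brunnian link $L$ such that $\rho(L)$ bounds, in $\mL_0$, a trivializing configuration using Möbius bands or non-orientable annuli, while $L$ itself is nonzero in $\tmL_0$. Concretely, I would modify $W_n$ and $B_n$, for example by doubling an unmarked component with the two strands oppositely oriented. In the partly oriented setting such a band-connected pair bounds an unoriented annulus, so $\rho(L)=0$; oriented, the pair is incoherent and obstructions survive. Nontriviality, order, and distinctness in $\tmL_0$ would be detected by invariants of the type used for Proposition~\ref{prop:example}. I would try the Conway/Milnor-type invariant of the marked component in the cover, or a signature of an associated knot for infinite order, and an Arf/linking-parity invariant for order $2$. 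Distinctness within each family would follow from the $n$-dependence, with Proposition~\ref{prop:Brunnian} upgrading ``same class'' to marked concordance, where classical invariants apply.

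\textbf{Main obstacle.} The hardest step will be Step 2: producing explicit Brunnian links that die under $\rho$ and computing invariants that certify order exactly $\infty$ or exactly $2$ in $\tmL_0$. I would first try to construct the order-$2$ family so that $L$ is amphichiral up to reversing unmarked orientations, making $2L=L\#(-L^*)=0$. Then order exactly $2$ reduces to showing $L\neq0$, which I would certify via a mod $2$ invariant that is preserved by oriented annuli but not by Möbius bands.
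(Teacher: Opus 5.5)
\textbf{Verdict: genuine gap.} Your reduction at the start of Step~2 is correct, and it is exactly how the paper finishes. Elements of $\mB$ are prime by Proposition~\ref{prop:Brunnian}. A family of primes that are pairwise not $\pm$ each other spans a direct summand $\bigoplus\langle p\rangle$ of $\tmL_0$ by Theorem~\ref{thm:structure}. So both assertions reduce to exhibiting two families of Brunnian classes in $\ker\rho$: infinitely many of infinite order, pairwise distinct up to sign, and infinitely many of order $2$. That, however, is the whole content of the theorem, and you do not supply it.

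The construction you suggest fails. Replacing an unmarked component $K$ of $W_n$ or $B_n$ by two parallel copies never gives a Brunnian link, since deleting one copy leaves $W_n$ or $B_n$. If the copies are oppositely oriented, they cobound a thin oriented annulus in a tubular neighbourhood of $K$, disjoint from $O$ and the other components. So the link is $\chi$-concordant in $\tmL_0$ to the link with both copies deleted, which is a trivial link.

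Separately, Step~1 is unnecessary and rests on a false claim. A minimizer in $\tmL_0$ need not map to one in $\mL_0$: the $2$-component link $C^1(W)$ below represents a nonzero class of $\tmL_0$, yet $\rho$ of it is $0$. What you want from Step~1 is automatic. The torsion of $\ker\rho$ is $\ker\rho\cap\tmL_2$, a $\Z/2\Z$-vector space. The quotient $\ker\rho/(\ker\rho\cap\tmL_2)$ embeds in the free group $\tmL_0/\tmL_2\cong\tmL_\infty$. Hence $\ker\rho\cong\Z^a\oplus(\Z/2\Z)^b$ with $a,b$ countable.

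The missing idea is to replace the odd $(n,1)$-cables by an even-winding cabling: the iterated $(2,1)$-cables $C^n(W)$ and $C^n(B)$, of winding number $2^n$, with the pattern applied to $K_1$ and its mirror to $K_2$ in the case of $B$.
\begin{itemize}
\item \emph{Brunnian:} deleting any component leaves a satellite of a trivial link by unknotted patterns, hence a trivial link.
\item \emph{In $\ker\rho$:} a $(2,1)$-cable bounds a M\"obius band inside a tubular neighbourhood of its companion, disjoint from $O$ (Propositions~\ref{prop:W}(3) and~\ref{prop:B}(3)).
\item \emph{Detected by an invariant:} the integral linking of lifts to the double branched cover over $O$ scales by the square of the winding number. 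The Goeritz computation gives $\lambda_{C^n(W)}(11,12)=-2\cdot 4^n$ and $\bar\lambda_{C^n(B)}(1,2)=4^n$.
\end{itemize}
Since $\lambda$ changes sign under $L\mapsto -L^*$, Lemma~\ref{lem:2comp2} shows that $2[C^n(W)]\neq 0$, so these classes have infinite order by Corollary~\ref{cor:prime}, and that they are pairwise distinct up to sign. Lemmas~\ref{lem:3comp1} and~\ref{lem:3comp3} show that the classes $[C^n(B)]$ are nonzero and pairwise distinct. The isotopy $B\simeq -B^*$ exchanging $K_1$ and $K_2$ is compatible with the patterns $P,P^*$, and it gives $2[C^n(B)]=0$ (Proposition~\ref{prop:B}(1)). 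With these two families, your reduction together with the countability bound yields both assertions of the theorem.
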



\section{Prime decomposition in $\mL^*_0$}

In this section, we prove Theorem~\ref{thm:prime} and Proposition~\ref{prop:Brunnian}.
We first introduce several notions and prove lemmas.
For marked links $L_0$ and $L_1$, 
a properly embedded surface $F$ in $B^4$ is called a {\it $\chi$-concordance from $L_0$ to $L_1$} if 
$F$ satisfies the conditions in Definition~\ref{dfn:DO}.
Throughout this section, for given marked link $L$  with marked component $K$, 
we denote by $\check{L}$ the sublink $L\setminus K$ of $L$.
Then, $-L^*_0 \# L_1$ can be regarded as the union of  the marked component 
and $(-\check{L_0^*})\cup\check{L_1}$.
A marked link $L$ is a {\it minimizer} of $[L] \in \mL^*_0$
if the number of the components of $L$ is minimal among all representatives of $[L]$.
Let $c(L)$ denote the number of the components of $L$.

\begin{lem}
\label{lem:minimal1}
Let $L$ and $L'$ be $\chi$-concordant links. 
If $L$ is a minimizer of $[L]\in\mL_0^*$, then there does not exist a $\chi$-concordance $F$ from $L'$ to $L$ 
such that $F$  contains a connected component bounded by a sublink of $\check{L}$.
\end{lem}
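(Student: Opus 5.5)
Suppose, for contradiction, that $F$ is a $\chi$-concordance from $L'$ to $L$ with a connected component $S$ whose boundary is a nonempty sublink $\check{L}_S$ of $\check{L}$. The plan is to cut $S$ away and obtain a representative of $[L]$ with fewer components than $L$, contradicting minimality.

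\emph{Step 1: identify $S$.} The boundary of $F$ is $-L'^* \# L$, which is the marked component together with $(-\check{L'^*})\cup\check{L}$. Since $\partial S\subset\check{L}$ and $S$ avoids the marked component, $S\neq D$. So $S$ is one of the non-disk components of $F\setminus D$. By the Euler characteristic remark after Definition~\ref{dfn:DO}, $S$ is an annulus or a M\"obius band in the partly oriented case, and an oriented annulus in the marked oriented case. Hence $\check{L}_S$ has one or two components, and $\chi(S)=0$.

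\emph{Step 2: show $L\setminus \check{L}_S$ is $\chi$-concordant to $L'$.} Let $L''=L\setminus\check{L}_S$; it keeps the marked component of $L$. Put $F'=F\setminus S$. This is still a properly embedded compact surface in $B^4$ with no closed components, and
\[
\partial F' = -L'^*\# L''.
\]
Its Euler characteristic is $\chi(F')=\chi(F)-\chi(S)=1$. It still contains the single disk $D$ bounded by the marked component, and its remaining components are non-disk surfaces of the same type (unoriented, or oriented) as before. So $F'$ is a $\chi$-concordance from $L'$ to $L''$. Since $\chi$-concordance is an equivalence relation (it defines the groups $\mL^*_0$), we get $[L'']=[L']=[L]$.

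\emph{Step 3: conclude.} We have $c(L'')=c(L)-c(\check{L}_S)<c(L)$, which contradicts the assumption that $L$ is a minimizer of $[L]$.

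The only delicate point is Step~1's use of the Euler characteristic count, which ensures that removing $S$ preserves $\chi=1$. In the marked oriented case one should also check that orientations on $\partial F'$ agree with $-L'^*\#L''$. This is immediate, because $F'$ inherits its orientations from $F$.
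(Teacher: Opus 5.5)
Your proposal is correct and follows the paper's argument. The paper also deletes the component bounded by a sublink of $\check{L}$, which yields a $\chi$-concordance from $L'$ to a representative of $[L]$ with fewer components, contradicting minimality. Your Euler characteristic check, showing the removed piece is an annulus or M\"obius band so that $\chi=1$ is preserved, makes explicit a detail the paper leaves implicit.
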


\begin{proof}
If  a $\chi$-concordance $F$ from $L'$ to $L$ contains 
a connected component $F'$ with $\partial F' \subset \check{L}$, then 
$F \setminus F'$ is a $\chi$-concordance from $L'$ to $L \setminus \partial F'$. 
This contradicts the assumption that $L$ is a minimizer, since $L$ and $L'$ are $\chi$-concordant.
\end{proof}

\begin{lem}
\label{lem:minimal2}
If $\chi$-concordant links $L$ and $L'$ are minimizers of $[L] =[L'] \in \mL^*_0$, then 
a $\chi$-concordance $F$  from $L'$ to $L$ induces a marked-concordance from $L'$ to $L$.
\end{lem}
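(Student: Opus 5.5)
We analyze the components of $F$ using a count of Euler characteristics and boundary circles. By Definition~\ref{dfn:DO}, $F$ consists of a single disk $D$, bounded by the marked component of $-L'^*\#L$, together with surfaces $F_1,\dots,F_m$. Each $F_j$ has $\chi(F_j)\le 0$ and no closed components, and the total Euler characteristic is $\chi(F)=1$. Hence $\sum_j \chi(F_j)=0$, which forces $\chi(F_j)=0$ for every $j$. So each $F_j$ is an annulus or a M\"obius band in the partly oriented case, and an oriented annulus in the marked oriented case. The boundary of $F\setminus D$ is exactly $(-\check{L'^*})\cup\check{L}$.

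Next we apply Lemma~\ref{lem:minimal1} twice. Since $L$ is a minimizer, no component $F_j$ has boundary contained in $\check{L}$. Reversing $F$, that is, reflecting $B^4$ so that $F$ becomes a $\chi$-concordance from $L$ to $L'$, and using that $L'$ is also a minimizer, we find that no $F_j$ has boundary contained in $\check{L'}$. In particular there are no M\"obius bands, since a M\"obius band has a single boundary circle, which would lie entirely in one side. Every $F_j$ is therefore an annulus with one boundary circle in $-\check{L'^*}$ and the other in $\check{L}$. This gives a bijection between the unmarked components of $L'$ and those of $L$, and consistently $c(L)=c(L')$.

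Finally we convert $F$ into annuli in $S^3\times[0,1]$. Choose a point $p$ on the marked component of the connected sum, near the band used to form $-L'^*\#L$, and a small arc of $D$. We view $B^4$ as the boundary connected sum of two copies of $B^4$ separated by a $3$-ball meeting $S^3$ in the connected-sum sphere. Equivalently, we remove a neighbourhood of a properly embedded arc in $D$ that runs from the connected-sum band region back to itself; this arc is chosen to separate $D$ into the two halves coming from the marked components $K'$ and $K$. The standard identification $B^4\setminus \nu(\text{arc})\cong S^3\times[0,1]$ turns $D$ into an annulus $A_1$ with $\partial A_1=K'\cup(-K^*)$, up to the conventions of the definition. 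The annuli $F_j$ are disjoint from the removed arc, so they become annuli in $S^3\times[0,1]$ joining each component of $\check{L'}$ to a component of $\check{L}$. Orientations behave correctly: in the marked oriented case each $F_j$ is oriented and its boundary is $\check{L'}\cup(-\check{L}^*)$ as required, while in the partly oriented case no orientation is needed. The result is a marked-concordance from $L'$ to $L$.

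The main obstacle is this last step: justifying that cutting along the arc in $D$ yields $S^3\times[0,1]$ with the correct boundary identifications and orientation conventions. This is the standard equivalence between a concordance and a slice disk of $-K_0^*\#K_1$, and I would state it as a routine lemma rather than grind through the details. The combinatorial part is the short Euler-characteristic argument above.
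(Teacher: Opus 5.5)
Your combinatorial part is correct and is the paper's argument made explicit. The paper only says that Lemma~\ref{lem:minimal1} forces $F$ to be the disk plus annuli, each joining a component of $-\check{L'}^*$ to one of $\check{L}$. Your Euler characteristic count, together with the two applications of Lemma~\ref{lem:minimal1}, is a valid way to see this. The second application, via reflecting $B^4$ and reversing orientations (using $-(-L'^*\#L)^*=L'\#(-L^*)$), is legitimate; one application together with $c(L)=c(L')$ would also suffice.

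The problem is the final step, which the paper calls canonical but which you describe backwards. A properly embedded arc in $B^4$ is unknotted, and $B^4\setminus\nu(\text{arc})\cong S^2\times D^2$. Its boundary $S^2\times S^1$ is connected, so it is not $S^3\times[0,1]$. Moreover, cutting the disk $D$ along a properly embedded arc gives two disks, not an annulus. Deleting an arc neighbourhood is the move in the opposite direction. Starting from an annulus in $S^3\times[0,1]$, removing a neighbourhood $B^3\times[0,1]$ of a spanning arc $\{p\}\times[0,1]$ on it yields $B^4$ together with a disk bounded by the connected sum. The ``boundary connected sum of two $4$-balls'' picture does not help either, since every annulus $F_j$ must cross the separating $3$-ball.

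To pass from $B^4$ back to $S^3\times[0,1]$ you must glue, not cut:
\begin{enumerate}
\item Thicken the connected-sum sphere to a collar $S^2\times[0,1]\subset\partial B^4$; it meets the marked component in two arcs $\{q_1,q_2\}\times[0,1]$.
\item Identify $B^4$ with $(S^3\setminus\mathrm{int}\,B^3)\times[0,1]$ so that this collar becomes $\partial B^3\times[0,1]$. The $-L'^*$ side of the connected sum should lie in $(S^3\setminus\mathrm{int}\,B^3)\times\{0\}$ and the $L$ side in $(S^3\setminus\mathrm{int}\,B^3)\times\{1\}$.
\item Glue in $B^3\times[0,1]$ to obtain $S^3\times[0,1]$.
\item Set $A_1:=D\cup(\alpha\times[0,1])$, where $\alpha\subset B^3$ is an arc from $q_1$ to $q_2$. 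This is an annulus bounded by the two marked components.
\end{enumerate}
The annuli $F_j$ lie in the original $B^4$ and are untouched. With this replacement your proof is complete and agrees with the paper's.
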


\begin{proof}
It immediately follows from Lemma~\ref{lem:minimal1} that 
$F$ consists of one disk and several annuli, each of which connects 
a component of $(-L')^*$ to a component of $L$. 
Since $F$ is bounded by $(-L')^*\#L$, by the definition of the connected sum of marked links, 
$F$ induces a marked-concordance from $L'$ to $L$ in a canonical way.
\end{proof}

\begin{lem}
\label{lem:sum1}
Let  $L_1$ and $L_2$ be marked links representing prime elements in $\mL^*_0$ with  
$[L_1 \# L_2]\neq 0$.
If both $L_1$ and $L_2$ are minimizers, then 
$L_1 \# L_2$ is a minimizer of $[L_1 \# L_2]$.
\end{lem}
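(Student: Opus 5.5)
The plan is to argue by contradiction, analysing a $\chi$-concordance from a genuine minimizer to $L_1\# L_2$ and showing that any "shortcut" would contradict either the minimality of one of the $L_i$ or the primeness of $[L_1]$ or $[L_2]$. Put $x=[L_1\# L_2]\neq 0$. Suppose $L_1\# L_2$ is not a minimizer of $x$, and choose a minimizer $L'$ of $x$. Then
\[
c(L')<c(L_1\# L_2)=c(L_1)+c(L_2)-1 .
\]
Since $[L_1]$ and $[L_2]$ are prime and the $L_i$ are minimizers, we have $c(L_i)\ge 2$. Fix a $\chi$-concordance $F$ from $L_1\# L_2$ to $L'$. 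Its boundary is $-(L_1\# L_2)^*\# L'$, whose unmarked part is $(-\check{L}_1^*)\cup(-\check{L}_2^*)\cup \check{L'}$.

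Applying Lemma~\ref{lem:minimal1} to the minimizer $L'$, no component of $F$ is bounded by a sublink of $\check{L'}$. Hence every annulus of $F\setminus D$ that meets $\check{L'}$ has its other boundary circle in $\check{L}_1$ or $\check{L}_2$. I classify the components of $F\setminus D$ as follows:
\begin{itemize}
\item $C_i$: annuli joining $\check{L'}$ to $\check{L}_i$;
\item $A$: annuli joining $\check{L}_1$ to $\check{L}_2$;
\item $B_i$: annuli or M\"obius bands with boundary entirely in $\check{L}_i$.
\end{itemize}
Counting boundary circles gives $c(L')-1=|C_1|+|C_2|$ and $c(L_i)-1=|C_i|+|A|+(\text{circles in }B_i)$. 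So the strict inequality forces $A\cup B_1\cup B_2\neq\emptyset$.

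The core step is to eliminate these three types separately.
\begin{itemize}
\item If $B_1\neq\emptyset$, with $S\subset\check{L}_1$ the sublink it bounds, I would build a $\chi$-concordance from $L_1$ to the smaller link $L_1\setminus S$. To do this, glue $F$ to the standard ribbon concordance $(-L_2^*)\# L_2\simeq 0$ and to a $\chi$-concordance from $L'$ back to $L_1\# L_2$. The aim is to exhibit $[L_1]=[L_1\setminus S]$, which contradicts the minimality of $L_1$. The case $B_2\neq\emptyset$ is symmetric.
\item If only $A$ and the $C_i$ occur, then the surfaces in $C_1\cup A$ and $C_2\cup A$ should let me split $L'$ as a connected sum $L'_1\# L'_2$ with $[L'_1]=[L_1]$ and $[L'_2]=[L_2]$. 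Here $L'_i$ consists of the marked component together with the components of $\check{L'}$ attached through $C_i$.
\item If moreover $A\neq\emptyset$, then $c(L'_i)<c(L_i)$ for some $i$, again contradicting that $L_i$ is a minimizer.
\item If $A=\emptyset$, the counting above would force $c(L')=c(L_1)+c(L_2)-1$, which is impossible.
\end{itemize}

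The step I expect to be the main obstacle is the geometric separation inside $B^4$. The surfaces of $F$ bounded by $\check{L}_1$-components are disjoint from, but possibly linked with, those bounded by $\check{L}_2$ and $\check{L'}$. So I cannot simply discard them and keep a surface of the required form with the right boundary. My first attempt would be to work with the boundary-connected-sum decomposition of $B^4$ along the connected-sum sphere, tubing along the disk $D$ to isolate the relevant sublinks, and then using the product (ribbon) concordances $-L^*\# L\to 0$ to cancel the unwanted pieces. If this fails, the fallback is to use primeness directly: show that the existence of type $A$ or $B$ components writes $[L_1]$ or $[L_2]$ as a sum realized by a connected sum of two marked links each with at least two components, contradicting primeness. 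That would reduce the lemma to the bookkeeping of component counts above.
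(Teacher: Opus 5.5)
Your argument has a genuine gap in the decisive case $A\neq\emptyset$, $B_1=B_2=\emptyset$. You claim that $L'$ splits as $L'_1\# L'_2$ with $[L'_i]=[L_i]$, but $L'$ is an arbitrary minimizer. The components of $\check{L'}$ reached through $C_1$ may be linked with those reached through $C_2$, so nothing makes $L'$ a connected sum of these sublinks. Worse, your data do not give $[L'_i]=[L_i]$: the sub-surface $D\cup C_1\cup A$ joins $L'_1$ to $L_1$ \emph{together with} the part of $L_2$ met by $A$, not to $L_1$ alone.

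This step never uses primeness, yet the lemma is false without it. Take a minimizer $L_1=M\# N$ with $M,N$ two-component links (so $[L_1]$ is not prime), $L_2=-N^*$ and $L'=M$. The standard concordance cancelling $N\#(-N^*)$ has $A\neq\emptyset$ and $C_2=\emptyset$, so $L'_2$ is just the marked component and $[L'_2]=0\neq[L_2]$. Your fallback mentions primeness but does not say which connected sum would contradict it, so it does not close the gap.

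The obstacle you flag for the $B_i$ case is not real. Components of $F$ may be linked in $B^4$, but deleting one still leaves a properly embedded surface with the remaining boundary. Removing a $B_1$-component therefore gives a $\chi$-concordance from $L_1\setminus S$ to $(-L_2^*)\# L'$, so $[L_1\setminus S]=[L']-[L_2]=[L_1]$. This is just Lemma~\ref{lem:minimal1}, with no gluing needed.

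The missing idea, which is how the paper argues, is to contradict primeness rather than minimality. The connected-sum decomposition should be built on the \emph{boundary} side, where it comes for free.

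Since $x\neq 0$ we have $c(L')\geq 2$, so we may assume $C_1\neq\emptyset$ (otherwise swap the roles of $L_1$ and $L_2$). Let $L'_1\subset L'$ and $L_2^A\subset L_2$ be the sublinks consisting of the marked component together with the unmarked components met by $C_1$ and by $A$, respectively. Then $D\cup C_1\cup A$ is a union of components of $F$, hence is itself a $\chi$-concordance. Because $B_1=\emptyset$, its boundary is the sublink $-L_1^*\#(-(L_2^A)^*)\# L'_1$ of $\partial F$.

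Thus $L_1$ is $\chi$-concordant to the connected sum $L'_1\#(-(L_2^A)^*)$. This link has $1+|C_1|+|A|=c(L_1)$ components, so it is a minimizer of $[L_1]$. Both summands have at least two components because $C_1\neq\emptyset$ and $A\neq\emptyset$. This says exactly that $[L_1]$ is non-prime, a contradiction.
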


\begin{proof}
Let $L$ be a minimizer of $[L_1 \# L_2] \in \mL^*_0$.
Suppose that $L_1\# L_2$ is not a minimizer. 
Then $c(\check{L}) < c(\check{L_1}) + c(\check{L_2})$. 
Let $F$ be a $\chi$-concordance from $L$ to $L_1 \# L_2$ and $D$ a disk-component of $F$. 
We note that $F$ yields $\chi$-concordances from $L \# (-L_1^*)$ to $L_2$, and 
from $L \# (-L_2^*)$ to $L_1$.
Since $L_1$, $L_2$, and $L$ are minimizers, by Lemma~\ref{lem:minimal1}, 
$F\setminus D$ consists of annuli, none of which is bounded by any sublink of 
$\check{L}$, $\check{L_1}$, or $\check{L_2}$.

On the other hand, since $[L]=[L_1 \# L_2] \neq 0$ in $\mathcal{L}_0^*$, we have $c(L) \geq 2$. 
Hence $F\setminus D$ contains a union of annuli such that the boundary contains $\check{L}$, 
and  each component connects a component of $\check{L}$ to a component of  
$\check{L_1}\cup\check{L_2}$.  Furthermore, we may assume that there is at least one annulus in 
$F\setminus D$ connecting a component of $\check{L}$ to a component of  $\check{L_1}$.
Let $A_1(\subset F\setminus D)$ be the union of these annuli. 

Since $c(\check{L}) < c(\check{L_1}) + c(\check{L_2})$, 
$F$ contains at least one annulus  connecting a component of $\check{L_1}$ to
a component of $\check{L_2}$. Let $A$ be the union of these annuli.

It follows that we obtain  a $\chi$-concordance $D \cup A \cup A_1$ from $L_1$ to the connected sum
$L' \# L_2'$ of sublinks $L' (\subset L)$ and $L_2' (\subset L_2)$. 
Since $c(L') \geq 2$, $c(L_2') \geq 2$, and $c(L' \# L_2') = c(L_1)$, 
this contradicts either that $[L_1]$ is prime or that $L_1$ is a minimizer.
\end{proof}

\begin{prop}
\label{lem:sum2}
Let $\{ x_i \}_{i=1}^n$ be a finite sequence of prime elements in $\mL_0^*$ 
(possibly with the same element appearing multiple times in the sequence). 
If  
$x_i + x_j \neq 0$ for any $1 \leq i < j \leq n$.
then, $\sum_{i=1}^n x_i\neq 0\in\mL_0^*$.
\end{prop}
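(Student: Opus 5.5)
Choose, for each $i$, a minimizer $L_i$ of $x_i$, and set $L = L_1 \# \cdots \# L_n$. Since each $x_i$ is nonzero, $c(\check{L_i}) \ge 1$. We argue by contradiction: suppose $\sum x_i = 0$. Then $L$ is $\chi$-concordant to the trivial knot $U$, and $U$ is a minimizer of $0$ with $\check U = \emptyset$. So there is a $\chi$-concordance $F$ from $U$ to $L$. Its disk component $D$ bounds the marked component, and $F \setminus D$ is a union of annuli (and possibly M\"obius bands in the partly oriented case). Each piece of $F\setminus D$ has its boundary in $\check L = \bigsqcup_i \check{L_i}$.

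The plan is to read off from $F$ a concordance that contradicts primeness or minimality of some $L_i$, in the same spirit as the proof of Lemma~\ref{lem:sum1}. Group the pieces of $F\setminus D$ according to which blocks $\check{L_i}$ their boundaries meet, and record this as a graph $G$ on the vertex set $\{1,\dots,n\}$: there is an edge between $i$ and $j$ ($i\ne j$) when some annulus joins a component of $\check{L_i}$ to a component of $\check{L_j}$.

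First consider a piece lying entirely in one block $\check{L_i}$, i.e.\ a M\"obius band, or an annulus with both ends in $\check{L_i}$. Such a piece is a component bounded by a sublink of $\check{L_i}$. Viewing $F$ as a $\chi$-concordance from $\#_{j\ne i}(-L_j^*)$ to $L_i$, Lemma~\ref{lem:minimal1} rules it out. Hence every piece is an annulus joining two different blocks. Consequently every vertex of $G$ has positive degree, since $\check{L_i}\neq\emptyset$.

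Now fix $i$ and let $J\subset\{1,\dots,n\}\setminus\{i\}$ be the set of neighbours of $i$ in $G$. Regard $F$ as a concordance from $\#_{j\ne i}(-L_j^*)$ to $L_i$. The disk $D$ together with the annuli meeting $\check{L_i}$ gives a $\chi$-concordance from $L_i$ to $\#_{j\in J} L_j'$, where $L_j' \subset L_j$ is the sublink consisting of the marked component and the components hit by these annuli. The remaining annuli can be discarded, because they are disjoint and bounded away from $\check{L_i}$. The component count is exact: $c(\check{L_i})=\sum_{j\in J}c(\check{L_j'})$.
\begin{itemize}
\item If $|J|\ge 2$, then $L_i$ is $\chi$-concordant to a connected sum of at least two marked links, each with at least two components, and with the same total number of components as $L_i$. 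Since $L_i$ is a minimizer, this representative is also a minimizer, so $x_i$ is non-prime. This is a contradiction.
\item If $J=\{j\}$ and $L_j'\ne L_j$, then $L_i$ is $\chi$-concordant to a proper sublink $L_j'$ of $L_j$. Symmetrically, the same annuli show $L_j$ is $\chi$-concordant to the sublink of $L_i$ they hit. Comparing component counts of the two minimizers then forces both sublinks to be everything, which contradicts $L_j'\ne L_j$.
\end{itemize}
So every $i$ has exactly one neighbour $j$, and the annuli give a bijection $\check{L_i}\leftrightarrow\check{L_j}$. Moreover $D$ together with these annuli is a $\chi$-concordance from $U$ to $L_i\# L_j$, showing $x_i+x_j=0$ with $i\neq j$. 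This contradicts the hypothesis.

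The main obstacle is the bookkeeping in the last step: making precise that restricting $F$ to $D$ plus a subset of annuli still yields a legitimate $\chi$-concordance between the indicated connected sums. This requires checking that $\partial D$ decomposes correctly as the marked component of the appropriate connected sum, and that the Euler characteristic stays $1$. This is exactly the maneuver used implicitly in Lemma~\ref{lem:sum1}, and I would argue it the same way.
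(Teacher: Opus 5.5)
Your strategy works, but the second bullet fails as written. In the case $J=\{j\}$, $L'_j\neq L_j$, the annuli joining blocks $i$ and $j$ use up all of $\check{L_i}$. By the case hypothesis they do not use up all of $\check{L_j}$. Together with $D$ they therefore bound only $L_i\#L'_j$ (up to slice summands), and nothing bounds all of $\check{L_j}$. So the claim that ``symmetrically'' $L_j$ is $\chi$-concordant to the sublink of $L_i$ they hit has no support. The count $c(L_i)=c(L'_j)<c(L_j)$ gives no contradiction on its own either. Here $L'_j$ represents $-x_i$ while $L_j$ represents $x_j$, and minimality of $L_j$ only compares representatives of $x_j$.

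The repair uses your own first bullet at the vertex $j$. The components of $\check{L_j}\setminus\check{L'_j}$ lie on annuli that miss $\check{L_i}$, since every annulus at $\check{L_i}$ already ends in $\check{L'_j}$. By your Step 2 these annuli are not internal to block $j$, so they run to some block $k\notin\{i,j\}$. Hence $j$ has two neighbours and $x_j$ is non-prime. More cleanly, apply the first bullet at every vertex: each vertex then has exactly one neighbour. Since adjacency is symmetric, the neighbour map is a fixed-point-free involution. This gives the bijection $\check{L_i}\leftrightarrow\check{L_j}$ directly, and the second bullet can be dropped.

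There is also a harmless bookkeeping slip. With the convention $\partial F=-L_0^*\#L_1$, the surface $D\cup\{\text{annuli at }\check{L_i}\}$ is a $\chi$-concordance to $L_i$ from $\#_{j\in J}(-{L'_j}^*)$, connected with the slice marked components of the blocks outside $J\cup\{i\}$. It is not a concordance from $L_i$ to $\#_{j\in J}L'_j$. The slice summands can be absorbed into one factor, and $-(\cdot)^*$ preserves component counts, so the first bullet goes through. Your final step is fine.

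The route differs from the paper's. The paper first proves Lemma~\ref{lem:sum1}: $L_i\#L_j$ is a minimizer whenever $x_i,x_j$ are prime with $x_i+x_j\neq 0$. After that no graph is needed. Any single piece $C$ of $F\setminus D$ has $\partial C\subset\check{L_i}\cup\check{L_j}$ for some $i<j$. Then $F\setminus C$ shows that $(L_i\#L_j)\setminus\partial C$ represents $x_i+x_j$ with fewer components than $L_i\#L_j$, a contradiction.

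Your argument instead runs the mechanism of the proof of Lemma~\ref{lem:sum1} directly on the $n$-fold sum. It is longer but self-contained, and once repaired it proves more. For prime minimizers with $\sum_i x_i=0$, the annuli of any null $\chi$-concordance pair the indices off perfectly, with $x_i+x_j=0$ for each pair, so $n$ is even. The hypothesis $x_i+x_j\neq 0$ enters only at the very end, whereas the paper needs it up front to invoke Lemma~\ref{lem:sum1}.
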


\begin{proof}
Let $L_i$ be a minimizer of $[L_i] =x_i$ $(i=1,...,n)$. 
Suppose that  $\sum_{i=1}^n x_i = 0$. 
Then there exists a $\chi$-concordance $F$ with the disk-component $D$ 
from the unknot to 
$\#_{i=1}^n L_i$. Here, we see 
\[
c(\#_{i=1}^n L_i) = \sum_{i=1}^n c(\check{L_i}) + 1 \geq n+1 \geq 2,
\]  
and hence $F\setminus D$ contains at least one connected component $C$ 
such that $\partial C$ is contained in $\check{L_i}\cup \check{L_j}$ for some $1\leq i<j\leq n$. 
(Note that in the partly oriented case, $C$ might be a M\"{o}bius band 
with its boundary contained in a single link $\check{L_i}$.)
Since $F$ yields a $\chi$-concordance from $\#_{k \neq i,j}(-L^*_k)$ to $L_i \# L_j$, 
\[
[(L_i \# L_j )\setminus \partial C] = [\#_{k \neq i,j}(L_k)] = [L_i \# L_j].
\]
This is a contradiction, since, by Lemma~\ref{lem:sum1}, $L_i \# L_j$ is a minimizer.
\end{proof}

\begin{cor}
\label{cor:prime}
Any prime element of $\mL^*_0$ has order 2 or $\infty$.
\end{cor}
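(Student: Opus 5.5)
Let $x$ be a prime element and suppose it has finite order $m$, so $m \geq 2$ because $x \neq 0$. We must show that $m = 2$, which we do by applying Proposition~\ref{lem:sum2} to a sequence consisting of copies of $x$.

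First, assume $2x \neq 0$. Take the sequence $x_1 = \dots = x_m = x$. For every $i<j$ we have $x_i + x_j = 2x \neq 0$, so the hypothesis of Proposition~\ref{lem:sum2} is satisfied, and it yields $\sum_{i=1}^m x_i = mx \neq 0$. This contradicts $mx = 0$. Hence $2x = 0$, and since $x \neq 0$, the order of $x$ is exactly $2$.

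Consequently, if the order of $x$ is not $2$, then no positive multiple of $x$ vanishes, and $x$ has infinite order. This proves the corollary.

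One point needs checking: Proposition~\ref{lem:sum2} explicitly allows the same prime element to occur several times in the sequence. That is exactly the case used here, so the argument goes through directly, and there is no real obstacle in this step.
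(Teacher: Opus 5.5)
Your proposal is correct and follows essentially the same approach as the paper: apply Proposition~\ref{lem:sum2} to a sequence of repeated copies of $x$ under the assumption $2x \neq 0$. The paper phrases it directly, showing $nx \neq 0$ for every $n \geq 1$, whereas you argue by contradiction using the finite order $m$; the two are equivalent.
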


\begin{proof}
Let $x\in \mL^*_0$ be a prime element with $2x \neq 0$.
Then, for any $n \geq 1$,
we can apply Proposition~\ref{lem:sum2} to a sequence $\{x_i\}_{i=1}^n = \{x, x, \ldots, x\}$,
which shows $nx \neq 0$.
\end{proof}

Set 
\[
\mP^*_2 := \{ x \in \mL^*_0 \mid \text{ $x$ is prime and has order 2} \}
\]
and
\[
\widetilde{\mP^*_{\infty}} := \{ x \in \mL^*_0 \mid \text{ $x$ is prime and has order  $\infty$} \}.
\]
Consider an equivalence relation on $\widetilde{\mP^*_\infty}$ defined as follows: 
$x \sim y$ if $x = \pm y$ in $\mathcal{L}_0^*$. 
Then take a subset $\mathcal{P}^*_\infty \subset \widetilde{\mP^*_\infty}$ by choosing 
exactly one representative from each equivalence class of $\widetilde{\mP^*_\infty} / \sim$.
Let $\mL^*_{\infty}$ and $\mL^*_2$ denote the subgroups of $\mL^*_0$ generated by $\mP^*_{\infty}$ 
and $\mP^*_{2}$, respectively.
The following theorem essentially determines the structure of $\mL^*_0$.

\begin{thm}
\label{thm:structure}
The following assertions hold:
\begin{itemize}
\setlength{\itemsep}{2mm}
\item[(1)]
$\mL^*_0 = \mL^*_\infty \oplus \mL^*_2$,
\item[(2)]
$\mL^*_\infty$ is a free abelian group with a free basis $\mP^*_\infty$, and
\item[(3)]
$\mL^*_2$ is a $\Z/2\Z$-vector space with a basis $\mP^*_2$.
\end{itemize}
\end{thm}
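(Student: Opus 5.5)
We prove the three assertions by combining two facts: every element is a sum of primes (generation), and Proposition~\ref{lem:sum2} rules out nontrivial relations (independence). The key reduction is that any nonzero element is a sum of prime elements.

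\emph{Generation.} We argue by induction on the minimal number of components $c(L)$ of a representative $L$ of $x\neq 0$. If $x$ is prime there is nothing to show. Otherwise $x$ is non-prime, so a minimizer of $x$ is $L_1\# L_2$ with $c(\check{L_i})\ge 1$. Hence $c(L_i)<c(L_1\# L_2)$, and the minimal component counts of $[L_i]$ are strictly smaller than that of $x$. Each $[L_i]$ is either zero or, by induction, a sum of primes. Therefore $x=[L_1]+[L_2]$ is a sum of primes; the base case is immediate because elements with $2$-component minimizers are prime. By Corollary~\ref{cor:prime}, each prime lies in $\mP^*_2$ or in $\widetilde{\mP^*_\infty}$, and the latter equals $\pm$ an element of $\mP^*_\infty$. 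Thus $\mL^*_0=\mL^*_\infty+\mL^*_2$.

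\emph{Independence.} Suppose we have a relation
\[
\sum_{p\in\mP^*_\infty} a_p p+\sum_{q\in\mP^*_2} b_q q=0,
\]
with finitely many nonzero coefficients, $b_q\in\{0,1\}$, and not all terms trivial. We would like to rewrite the left side as a sum of a sequence of primes to which Proposition~\ref{lem:sum2} applies. Replace $a_p p$ by $|a_p|$ copies of $\sgn(a_p)p$, and include each $q$ with $b_q=1$ once. The hypothesis of Proposition~\ref{lem:sum2} requires $x_i+x_j\neq 0$ for every pair of terms in the sequence. For two copies of the same $\pm p$, we have $\pm 2p\neq 0$ since $p$ has infinite order. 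For distinct $p\neq p'$ in $\mP^*_\infty$, the sum $\epsilon p+\epsilon' p'$ is nonzero because $\mP^*_\infty$ contains one representative per $\pm$ class. A mixed sum $\pm p+q=0$ is impossible, since it would force $p$ to have order $2$. Two distinct $q,q'$ with $q+q'=0$ would give $q=-q'=q'$, a contradiction. Each $q$ appears at most once, so $2q$ never occurs. Proposition~\ref{lem:sum2} then says the sum is nonzero, a contradiction. Hence every such relation is trivial.

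From independence all three assertions follow at once. Applying it to a relation with no $\mP^*_2$ terms shows that $\mP^*_\infty$ is a free basis of $\mL^*_\infty$, giving (2). Applying it to a relation with no $\mP^*_\infty$ terms, together with $2q=0$ for $q\in\mP^*_2$, shows that $\mL^*_2$ is an $\mathbb{F}_2$-space with basis $\mP^*_2$, giving (3). Applying it to mixed relations shows $\mL^*_\infty\cap\mL^*_2=0$, which together with generation gives (1).

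The main obstacle is bookkeeping the pairwise condition $x_i+x_j\neq0$, which is where the choice of representatives in $\mP^*_\infty$ matters. A second point to check is that the induction is well-founded; this holds because a non-prime minimizer splits into pieces with strictly fewer components.
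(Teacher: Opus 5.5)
Your proof is correct and is essentially the paper's: you get generation by induction on the number of components of a minimizer, and independence by applying Proposition~\ref{lem:sum2} to a sequence of signed copies of primes. Like the paper, you tacitly use that $-p$ is prime, which holds since $L\mapsto -L^*$ preserves component counts and connected sums. The only difference is in how independence is organized. The paper proves $\mL^*_\infty\cap\mL^*_2=\{0\}$ by passing to $2x$, so that only $\mP^*_\infty$-terms occur. You instead treat mixed relations directly by also checking the pairs $\pm p+q$ and $q+q'$, which also spells out the ``similarly'' that the paper leaves for (2) and (3).
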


\begin{proof}
We prove (1).
First we prove that $\mL^*_0 = \mL^*_\infty + \mL^*_2$. 
By Corollary~\ref{cor:prime}, it is enough to show that 
any non-zero element $x \in \mL^*_0$ is decomposed into a sum of prime elements.

Let $c(x)$ denote the number of components of a minimizer of $x$.
We argue by induction on $c(x)$. 

If $c(x) =2$, then $x$ is prime, and hence the assertion holds.
Suppose  that $c(x) =n (\geq 3)$. 
If $x$ is prime, the assertion holds. Hence we may assume that  $x$ is non-prime. 
Then there exists marked links $L_1, L_2$ such that 
$L_1 \# L_2$ is a minimizer of $[L_1] + [L_2] =x$ and $c(L_1), c(L_2) \geq 2$.  
It follows that $c(L_1), c(L_2) < n$, and by the induction hypothesis, each of $[L_1]$ and $[L_2]$ 
can be decomposed into a sum of prime elements; therefore, so can $x$. 

Next, we prove $\mL^*_\infty \cap \mL^*_2 = \{ 0\}$.
Assume $\mL^*_\infty \cap \mL^*_2 \neq \{ 0\}$. Then
there exists a non-zero element $x\in \mL^*_\infty \cap \mL^*_2 $.
Since $x\in \mL^*_\infty$, 
we can represent $x$ as a linear combination of mutually distinct elements 
$x_1, \dots, x_m$ in $\mP^*_\infty$ as
\[
x = \sum_{i=1}^m k_i x_i,
\]
where $k_i \neq 0$ for $1 \leq i \leq m$.

On the other hand,  $2x=\sum_{i=1}^m 2k_i x_i$ can also be regarded as the summation
of a sequence 
\[\{y_{11},...,y_{1,2|k_1|},...,y_{m,1},...,y_{m,2|k_m|}\},\] 
where $y_{is}=(\sgn k_i) x_i~(i=1,...m,~s=1,...,2|k_i|)$.  
By the definition of $\mP^*_{\infty}$, 
\[y_{is}+y_{jt}=(\sgn k_i) x_i + (\sgn k_j) x_j \neq 0\] 
for any $1 \leq i \leq j \leq m,~1\leq s\leq 2|k_i|,~1\leq t\leq 2|k_j|$.
Then Proposition~\ref{lem:sum2} implies that 
\[2x=\sum_{i=1}^m 2k_i x_i=\sum_{i=1}^m\sum_{s=1}^{k_i}y_{is} \neq 0,\] 
which contradicts the fact that $x$ belongs to $\mL^*_2$.

Similarly, the assertions (2) and (3) are also proved by Proposition~\ref{lem:sum2}.
\end{proof}

Now, we show Theorem~\ref{thm:prime} and Proposition~\ref{prop:Brunnian}.

\begin{proof}[Proof of Theorem~\ref{thm:prime}]
The existence of the form $x = \sum_{i=1}^n k_i x_i~(x_i\in\mP^*_{\infty}\cup\mP^*_2)$ 
immediately follows from Theorem~\ref{thm:structure}~(1).

To prove the second-half assertion, 
suppose that $x$ is also represented as 
$x  = \sum_{i=1}^m l_i y_i$, 
where $y_i$ is prime, $l_i y_i \neq 0$ and 
$y_s \neq \pm y_t$ for any $1 \leq i \leq m$ and $1 \leq s < t \leq m$.
Then either $y_i$ or $-y_i$ belongs to $\mP^*_\infty \cup \mP^*_2$ for each $i$,
and hence Theorem~\ref{thm:structure}~(2),(3) implies 
$m = n$ and the existence of a permutation $\sigma$
such that $l_i y_i = k_{\sigma(i)} x_{\sigma(i)}$.
\end{proof}

\begin{proof}[Proof of Proposition~\ref{prop:Brunnian}]
Let $L$ be a marked Brunnian link which is not equal to any trivial link in $\mL^*_0$.
We first prove that $L$ is a minimizer of $[L] \in \mL^*_0$.
Assume that $L_0$ is a minimizer of $[L_0] =[L] \in \mL^*_0$ and $F$ is a $\chi$-concordance 
from $L_0$ to $L$.
If $L$ is not a minimizer, then $c(L_0)<c(L)$ and hence  
$F$ has a connected component $C$ with 
$\partial C \subset \check{L}$. Now, a surface $F \setminus C$ gives
\[
[L \setminus \partial C] = [L_0] = [L],
\]
where $L \setminus \partial C$ is a trivial link. 
This contradicts to the assumption, and hence $L$ must be a minimizer of $[L] \in \mL^*_0$.

Next, let us prove the primeness of $[L] \in \mL^*_0 $. Assume that $[L]$ is non-prime, and then there exist marked links $L_1, L_2$ such that each $L_i$ has at least 2 components and $L_1 \# L_2$ is a minimizer 
of  $[L_1 \# L_2] = [L] \in \mL^*_0$. 
Here, Lemma~\ref{lem:minimal2} 
gives a marked-concordance from $L_1 \# L_2$ to $L$.
Since $c(\check{L_1}), c(\check{L_2}) \geq 1$, marked-concordance gives rise to
 a marked-concordance from each $L_i$ to a sublink of $L$,
which is a trivial link. This implies that $[L] =[L_1 \# L_2]$ is equal to some trivial link in $\mL^*_0$, a contradiction.
Therefore, $[L]$ is prime.

Now, the existence of marked-concordance between any two Brunnian links in $[L]$ directly follows from Lemma~\ref{lem:minimal2}.
\end{proof}

\section{Brunnian-Link Basis Elements}
In this section, we prove Theorem~\ref{thm:main}, Proposition~\ref{prop:example} and Theorem~\ref{thm:ker}.
We first prove Proposition~\ref{prop:example}.

Let $W = O \cup K_1$ be the Whitehead link with marked component $O$ and $P_n \subset S^1 \times D^2$ a pattern with winding number $n$. Then we set $P_n(W) := O \cup P_n(K_1)$.
For any $n \in \mathbb{N}$, we denote  the $(n,1)$-cabling by $C_{n,1}$ and the $n$-fold $(2,1)$-cabling by $C^n$.
Then we see $C_{n,1}(W) = W_n$, and the assertion for $W_n$ in Proposition~\ref{prop:example} is a consequence of the following proposition.

\begin{prop}
\label{prop:W}
The following assertions hold:
\begin{itemize}
\setlength{\itemsep}{2mm}
\item[(1)] If $n \neq 0$, then $[P_n(W)]$ is prime and has infinite order  in $\tmL_0$. Moreover, if $n \neq m$,
then $[P_n(W)] \neq [P_m(W)]$ in $\tmL_0$.
\item[(2)] For any odd $n > 0$, $[\rho(C_{n,1}(W))]$ is prime and has infinite order in $\mL_0$. 
Moreover, $[\rho(C_{2k-1,1}(W))]~(k\in\mathbb{N})$ are mutually distinct in $\mL_0$.
\item[(3)] For any $n \geq 1$, $[\rho(C^n(W))]=0$  in $\mL_0$. 
\end{itemize}
\end{prop}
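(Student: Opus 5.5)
The plan is to control $P_n(W)$ through an invariant of 2-component links that is preserved by $\chi$-concordance. The natural choice is the linking number between the marked component and the unmarked one. $P_n(W)$ has two components, so it is prime as soon as it is nonzero. Its linking number is $\lk(O,P_n(K_1)) = n\cdot \lk(O,K_1)$. This is $0$ for the Whitehead link, so the linking number itself is useless. Instead I will use the linking number of the unmarked component with a \emph{lift} in a double branched cover, or equivalently a Milnor-type invariant.

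\textbf{Plan for (1).} I will take the 2-fold (or $p$-fold) cyclic branched cover of $B^4$ over the disk $D$ bounded by the slice marked component. Consider a $\chi$-concordance from $nL$ to the unknot, after the necessary sign/mirror conventions. Its non-disk components are annuli, which lift to the branched cover $\Sigma$ of $B^4$ along $D$. Since $O$ is unknotted, the boundary of $\Sigma$ is $S^3$ and $\Sigma$ is a rational ball (a genuine ball when $O$ is unknotted). The Whitehead link component $K_1$ has $\lk=0$ with $O$, so it lifts to two curves $\tilde K_1,\tilde K_1'$. For the Whitehead link their linking number is $\pm1$ (this is the classical computation: the 2-fold cover of $S^3$ over $O$ turns the clasp into a linking). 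For $P_n(K_1)$ the lifts are $P_n$-satellites of the lifts, so the linking number becomes $n^2$ times that value, i.e.\ $\pm n^2$.

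The key step is that this quantity is additive under connected sum along marked components. It also vanishes whenever the unmarked components bound annuli in $B^4\setminus D$ pairing them up with zero total contribution, with orientations tracked in the marked oriented case. Concretely, I will define $\theta(L)=\sum_{\text{unmarked } J}\lk(\tilde J,\tilde J')$ and show it is a homomorphism $\tmL_0\to\Z$. The reason is that annuli in the branched cover give homology relations which force the total linking of the lifts in the boundary to vanish. Then $\theta(P_n(W))=\pm n^2\neq0$, which gives infinite order. To distinguish $n\neq m$ I will need an invariant sensitive to sign as well. I expect $\theta$ to carry a sign depending on orientation, so that $n$ and $-n$ give $n^2$ with the same sign. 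In that case I will refine $\theta$ using the $p$-fold cover, or use the Sato–Levine invariant $\bar\mu(1122)$. The latter is a concordance invariant of 2-component links with $\lk=0$, it extends additively here, and for $P_n(W)$ it should scale like $n^2$ times that of $W$. If that still fails to separate $n$ from $-n$, I would add a Milnor invariant of higher length that is odd in $n$.

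\textbf{Plan for (2).} Here orientations of the unmarked component are forgotten and Möbius bands are allowed, so only an invariant mod something, or one insensitive to orientation, survives. $\lk(\tilde J,\tilde J')$ for $J=C_{n,1}(K_1)$ is orientation-independent (reversing $J$ reverses both lifts), so it remains defined for unoriented $J$. A Möbius band bounded by a single component lifts to a surface whose boundary is $\tilde J\cup\tilde J'$. That forces $\lk(\tilde J,\tilde J')$ to be constrained (even, or tied to a normal Euler number). I will therefore pass to a suitable quotient in which it is additive, and use $n$ odd to get values $n^2$ that are nonzero and pairwise distinct there. I expect this Möbius band analysis to be the main obstacle.

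\textbf{Plan for (3).} I will construct explicitly a Möbius band in $B^4\setminus D$ bounded by $C^n(K_1)$ when orientation is ignored. For $n=1$, the $(2,1)$-cable of an unknotted curve bounds a Möbius band in its solid torus. Nesting this gives an induction, since $C^n=C\circ C^{n-1}$, and the band for $C^{n-1}$ is pushed into the complement of the disk bounded by $O$. This shows $\rho(C^n(W))$ is $\chi$-concordant to the unknot.
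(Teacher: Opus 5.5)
Your lifted linking number $\lk(\tilde J,\tilde J')$ is exactly the paper's $\lambda_L(11,12)$, but two of your claims about it are wrong.

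First, the value. The clasp of $W$ lifts to \emph{two} clasps between the two lifts in $\Sigma_2(O)$, so $\lambda_W(11,12)=\pm 2$, not $\pm 1$; the paper's Goeritz computation gives $-2$. Hence $\lambda_{P_n(W)}(11,12)=-2n^2$.

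Second, $\theta$ is not a $\chi$-concordance invariant, let alone a homomorphism, on $\tmL_0$. Let $J'$ be a reversed $0$-framed push-off of $K_1$ in $S^3\setminus O$. Then $O\cup K_1\cup J'$ bounds a disk plus a pushed-in framing annulus, so it is $0$ in $\tmL_0$, yet $\theta=-4$. What an annulus $A$ actually gives is $\lk(\partial\tilde A,\partial\tilde A')=0$, and this contains cross terms between different components.

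For (1) this can be repaired, because lifts lying in different connected summands of $\Sigma_2(O)\#\cdots\#\Sigma_2(O)$ have zero linking. The paper sidesteps the issue instead. A $2$-component link is nonzero in $\tmL_0$ by parity of the number of components, hence a minimizer. So any $\chi$-concordance between two such links is a marked concordance (Lemma~\ref{lem:minimal2}) and preserves $\lambda$ (Lemma~\ref{lem:2comp2}). Together with $\lambda_{-L^*}=-\lambda_L$ this gives $x\neq -x$, and Corollary~\ref{cor:prime} upgrades that to infinite order. Note that $-2n^2$ only separates $n^2\neq m^2$, so the paper's argument is no more sign-sensitive than yours on that point.

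The genuine gap is in (2), and it has three parts.

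\textbf{The parity argument cannot show non-triviality.} Since $\lk(J,O)=0$, the core of a M\"obius band $M\subset B^4\setminus D$ with $\partial M=J$ is null-homologous in $B^4\setminus D$. So $M$ lifts to two disjoint M\"obius bands, and the lifting argument yields $\lambda\equiv 0\pmod 2$. But $-2n^2$ satisfies this for every $n$, so this cannot show $[\rho(C_{n,1}(W))]\neq 0$. In particular, oddness of $n$ does not enter through $n^2$.

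\textbf{A finite quotient cannot do the rest.} A homomorphism to a finite quotient can never certify infinite order. Also, all odd squares agree mod $2$, so such an invariant cannot separate the classes $[\rho(C_{2k-1,1}(W))]$ either.

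\textbf{The missing idea is an independent M\"obius-band obstruction.} The paper builds a planar cobordism in $S^3\times[0,1]$ from $C_{n,1}(W)$ to a twist knot $K_n$ with $\sigma(K_n)=0$ and $\Arf(K_n)=1$ for odd $n$. A disk plus M\"obius band for $\rho(C_{n,1}(W))$ would then produce a M\"obius band in $B^4$ bounded by $K_n$. This contradicts Yasuhara's theorem that $\sigma+4\Arf\equiv 4\pmod 8$ rules this out (Lemma~\ref{lem:2comp1}).

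Once $[\rho(L)]\neq 0$, the $2$-component link $L$ is a minimizer. By Lemmas~\ref{lem:minimal1} and~\ref{lem:minimal2}, a $\chi$-concordance to another nonzero $2$-component class then contains no M\"obius band, only an annulus, possibly orientation-reversing, which $\lambda$ ignores. So the integer $\lambda=-2n^2$ survives in $\mL_0$ on nonzero classes (Lemma~\ref{lem:2comp3}). This gives $x\neq -x$, distinctness for different odd $n$, and infinite order via Corollary~\ref{cor:prime}.

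Your (3) is fine and is what the paper does. One step suffices: $C^n(K_1)$ is a $2$-strand cable of $C^{n-1}(K_1)$ (with $C^0(K_1)=K_1$). So it bounds a M\"obius band inside a tubular neighborhood of $C^{n-1}(K_1)\subset S^3\setminus O$.
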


To prove Proposition~\ref{prop:W}, we need the following three lemmas. Here, for given knot $K$,
denote by $\sigma(K)$ the knot signature of $K$ and by $\Arf (K)$ the Arf invariant of $K$. 
\begin{lem}
\label{lem:2comp1}
Let $L$ be a 2-component marked oriented link and $K$ a knot with $\sigma(K) + 4 \Arf (K) \equiv 4  \mod 8$.
If there is a planer surface $F$ in $S^3 \times [0,1]$ from $L$ to $K$, then 
the partly oriented link $\rho(L)$ is non-zero in $\mL_0$.
\end{lem}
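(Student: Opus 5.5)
We argue by contradiction: we assume $\rho(L)=0$ in $\mL_0$, use the definition of $\chi$-concordance to produce a Möbius band in $B^4$ bounded by $K$, and then contradict the known Möbius-band obstruction for knots.

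Write $L=O\cup K_1$ with $O$ the marked component. Suppose $[\rho(L)]=0$ in $\mL_0$, so there is a $\chi$-concordance from the unknot to $\rho(L)$. After capping the unknot side, we may regard it as a surface $S\subset B^4$ with $\partial S=\rho(L)$ and $\chi(S)=1$. By Definition~\ref{dfn:DO}, $S$ consists of a disk $D$ with $\partial D=O$ together with non-disk components of total Euler characteristic $0$. Only the single curve $K_1$ remains as boundary, and $S$ has no closed components. Hence $S\setminus D$ is one connected surface with one boundary circle and $\chi=0$, that is, a Möbius band $M$ with $\partial M=K_1$ and $M\cap D=\emptyset$.

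Next, attach the collar $S^3\times[0,1]$ to $B^4$ along $S^3\times\{0\}$, where $L$ sits, and glue in the planar surface $F$. The result is a larger $4$-ball containing $D\cup M\cup F$ with boundary $K$. The surface $F$ is planar with three boundary circles $O$, $K_1$, $K$, so it is a pair of pants. Gluing $D$ along $O$ turns $F\cup D$ into an annulus $A$ joining $K_1$ to $K$. Then $M\cup A$ is a Möbius band, properly and smoothly embedded after smoothing corners, with boundary $K$. As a check, $\chi(D\cup M\cup F)=1+0-1=0$, and the surface is connected, non-orientable, and has a single boundary component; the annulus description shows it is in fact a Möbius band.

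Finally, we invoke the Möbius band obstruction already cited in the introduction (\cite{Yasuhara}, in its general form going back to Gordon--Litherland and Murakami--Yasuhara). It says that if a knot $K$ bounds a Möbius band in $B^4$, then $\sigma(K)+4\Arf(K)\equiv 0,\pm 2 \pmod 8$. This contradicts the hypothesis $\sigma(K)+4\Arf(K)\equiv 4 \pmod 8$, so $\rho(L)\neq 0$. The main point to get right is the middle step: we must confirm that the $\chi$-concordance really forces $S\setminus D$ to be a single Möbius band rather than something else. This follows from $\chi(S\setminus D)=0$, a single boundary circle, and the absence of closed components. We must also check that the gluing produces an honest embedded Möbius band in a $4$-ball, which holds because $F$ lies in the collar, disjoint from the interior of $B^4$ where $D$ and $M$ live. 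Orientation of $K$ plays no role, since both the signature and the Arf invariant are insensitive to reversing the orientation of $K$.
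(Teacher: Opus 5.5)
Your proof is correct and follows the paper's argument: from $[\rho(L)]=0$ you get a disk bounded by the marked component and a disjoint Möbius band bounded by the other component in $B^4$. You then glue on the planar surface $F$ to obtain a Möbius band in $B^4$ bounded by $K$, which contradicts Yasuhara's obstruction. The only differences are that you spell out why the non-disk part must be a single Möbius band and why the glued surface is a Möbius band, both of which the paper takes for granted; the ``capping'' step is unnecessary, since a $\chi$-concordance from the unknot already lies in $B^4$ with boundary $\rho(L)$.
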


\begin{proof}
Suppose that $L = K_1 \cup K_2$ and $K_1$ is the marked component.
If $[\rho(L)] = 0$ in $\mL_0$, then $\rho(L)$ bounds a disjoint union $F'$ of a disk and 
a M\"{o}bius band in $B^4$.  
By gluing $(B^4, F')$ and $(S^3\times [0,1],F)$ along $\rho(L)$, 
we obtain a M\"{o}bius band $F\cup F'$  in $B^4$ with boundary $K$. 
However, it is proved in \cite{Yasuhara} that any knot $K$ with $\sigma(K) + 4 \Arf (K) \equiv 4 ( \mod 8)$
cannot bound a M\"{o}bius band in $B^4$, a contradiction. Thus we have $[L] \neq 0$ in $\mL_0$.
\end{proof}

\begin{dfn}\label{def:linkage}
Let $L= O \cup K_1\cup\cdots\cup K_n$ be an $(n+1)$-component marked oriented link such 
that the marked component $O$ is an unknot,  and $\lk(O, K_i)$ is even for any $i=1,...,n$. 
Then the double branched cover $\Sigma_2(O)$ of $S^3$ branched over $O$ 
is homeomorphic to $S^3$. 
Hence, the linking number of any pair of lifts $K_i$ and $K_j$ in $\Sigma_2(O)$  
takes values in $\Z$.
Since all $\lk(K, K_i)~(i=1,...,n)$ are even, $O$ bounds an unoriented surface $F$ in $S^3$ 
disjoint from $K_1\cup\cdots\cup K_n$. 
Then $\Sigma_2(O)$ decomposes as the union of $M_1$ and $M_2$, each of which is 
a copy of $S^3$ cut along $F$. 
For each $i,~k\in\{1,2\}$, let $K_{ik}$ denote the lift of $K_i$ contained in $M_k$.
In the following we denote $\lk(K_{ik},K_{jl})$ by $\lambda^F_L(ik,jl)$, or simply 
$\lambda_L(ik,jl)$. 

We note that $\lambda_L$ is symmetric, i.e., $\lambda_L(ik,jl)=\lambda_L(jl,ik)$.
We remark that if for some $i\in\{1,...,n\}$ the sublink $O\cup K_i$ is a split link, then 
$\lambda(i1,i2)=0$. 
\end{dfn}

\begin{remark}\label{cal:linkage}
In the case that $\lk(K_i,K_j)=0$ for $1 \le i\leq j \le m$  ($\lk(K_i,K_j)$ is defined to be $0$ if $i=j$), 
by \cite[Theorem~2.3]{PY}, we can calculate $\lambda^F_L(ik,jl)$ by using 
a {\it Goeritz matrix} ~\cite{G,GL}  of $F$ as follows:
Let $G_\alpha$ be the Goeritz matrix with respect to a basis 
$\alpha = (a_1, \dots, a_n)$ of $H_1(F)$, i.e., the $(i,j)$-entry of $G_\alpha$ is given by
$\mathrm{lk}(a_i, \tau a_j)$, 
where $\tau a_j$ is the 1-cycle in $S^3 - F$ obtained by pushing off $2 a_j$ in both normal directions.
For each $K_i$, define the vector
\[
V_\alpha(K_i) = (\mathrm{lk}(K_i, a_1), \dots, \mathrm{lk}(K_i, a_n)).
\]
Then  
\[
\lambda^F_L(ik,jl) = (-1)^{\delta_{kl}}V_\alpha(K_i) \, G_\alpha^{-1} \, V_\alpha(K_j)^T,
\] 
where $\delta_{kl}$ is the Kronecker delta.
\end{remark}

\begin{lem}
\label{lem:2comp2}
Let $L$ and $L'$ be 2-component marked links such that 
their marked components are unknots and their linking numbers are even.
If $[L]=[L']$ in $\tmL_0$, then 
$\lambda_L({11},{12}) = \lambda_{L'}({11},{12})$.
\end{lem}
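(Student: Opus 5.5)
The plan is to reduce the equality of classes to a genuine concordance, and then pass to double branched covers. Write $L = O \cup K$ and $L' = O' \cup K'$, where $O$ and $O'$ are the marked unknots.

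\emph{Step 1: obtain a marked-concordance.} Suppose $[L]=[L']$ in $\tmL_0$, and let $F$ be a $\chi$-concordance from $L$ to $L'$. By Definition~\ref{dfn:DO}, $F$ is a disk $D$ together with oriented annuli, because in the marked oriented case $F\setminus D$ consists of oriented surfaces with $\chi=0$ and no disks. Apart from the marked component, $\partial F$ has only two components, $-K^*$ and $K'$. Hence $F\setminus D$ is either empty or a single annulus $A$ joining $-K^*$ to $K'$.
\begin{itemize}
\item If $F\setminus D$ is empty, then $-K^*$ and $K'$ would not be boundary components of $F$ at all, which is impossible since $\partial F = -L^*\#L'$.
\item So $F\setminus D$ is the annulus $A$. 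Cutting $B^4$ along a neighborhood of the band arc of the connected sum, exactly as in Lemma~\ref{lem:minimal2}, turns $D\cup A$ into a marked-concordance $A_1\cup A_2\subset S^3\times[0,1]$. Here $A_1$ joins $O$ to $O'$ and $A_2$ joins $K$ to $K'$.
\end{itemize}

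\emph{Step 2: build the branched cover and lift the annulus.} Let $W$ be the double branched cover of $S^3\times[0,1]$ branched along $A_1$. Its boundary is $-\Sigma_2(O)\sqcup\Sigma_2(O')\cong -S^3\sqcup S^3$. The parity of $\lk(O,K)$ equals the class of $A_2$ in $H_1(S^3\times[0,1]\setminus A_1;\Z/2)\cong \Z/2$, since that group is generated by a meridian of $A_1$. This parity is even, so $A_2$ lifts to two disjoint annuli $\widetilde A_{2,1}$ and $\widetilde A_{2,2}$ in $W$. Their ends are the lifts $K_{11},K_{12}$ in $\Sigma_2(O)$ and $K'_{11},K'_{12}$ in $\Sigma_2(O')$, after suitably relabeling $k\in\{1,2\}$. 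The labeling does not matter because $\lambda(11,12)$ is symmetric under swapping the two lifts.

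\emph{Step 3: $W$ is a rational homology cobordism.} I will show that $W$ is a $\Z/2$-homology (hence $\mathbb{Q}$-homology) $S^3\times[0,1]$. The standard route uses the transfer/Smith-type exact sequence for double branched covers with $\Z/2$ coefficients. The inputs are that the branch set $A_1\cong S^1\times I$ has the homology of its boundary circle at each end, and that $S^3\times I$ is a homology product. Alternatively, I can glue: capping off with the cover of $B^4$ branched along a slice disk would reduce the claim to the known fact that double branched covers of $B^4$ over a disk are $\Z/2$-homology balls.

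\emph{Step 4: invariance of linking numbers, and the main obstacle.} Once $H_*(W,\partial_\pm W;\mathbb{Q})=0$, the standard argument applies. Choose rational 2-chains $C$ in $\Sigma_2(O)$ with $\partial C=K_{11}$ and $C'$ in $\Sigma_2(O')$ with $\partial C'=K'_{11}$. Then $C\cup\widetilde A_{2,1}\cup -C'$ is a closed rational 2-cycle in $W$, and its intersection number with the disjoint annulus $\widetilde A_{2,2}$ (taken rel boundary) is $\lk(K_{11},K_{12})-\lk(K'_{11},K'_{12})$. This intersection number vanishes because the 2-cycle is null-homologous in $W$, which holds since $H_2(W;\mathbb{Q})\cong H_2(S^3;\mathbb{Q})=0$. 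The result is $\lambda_L(11,12)=\lambda_{L'}(11,12)$.

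Two points need care. One is the sign conventions from the mirror and reversal in $-L^*\#L'$; these must produce an orientation-preserving identification at the $L'$ end. The other is Step~3, which I expect to be the main obstacle: the rational acyclicity of the branched cover. There I would first try the Smith-inequality/transfer argument.
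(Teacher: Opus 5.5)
Your proposal is correct and follows the paper's proof: you reduce to a marked-concordance (the paper via the minimizer Lemma~\ref{lem:minimal2}, you by a direct component count), take the double branched cover of $S^3\times[0,1]$ along the marked annulus, and use rational acyclicity so that the intersection number of the two lifts of the unmarked annulus vanishes; that number equals $\lambda_L(11,12)-\lambda_{L'}(11,12)$. The paper simply asserts the rational acyclicity you flag as the main obstacle, and your gluing alternative settles it directly: cap the $O$ end with the trivial disk, whose branched cover is $B^4$, then apply Mayer--Vietoris.
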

\begin{proof}
Since both $L$ and $L'$ are $2$-component links, they are minimizers of $[L] =[L'] \in \tmL_0$.
Hence
Lemma~\ref{lem:minimal2} shows that there exists a marked-concordance 
$A \cup A_1$ from $L$ to $L'$
such that $A_1$ connects the unmarked components of $L$ and $L'$. 
Moreover, there are two lifts ${A}_{11}$, ${A}_{12}$ of $A_1$ in the double branched cover 
$\Sigma_2(A)$ of $S^3 \times [0,1]$ over $A_1$. 
Since $\Sigma_2(A)$ of $S^3 \times [0,1]$ is a two punctured rational homology 4-sphere, 
$|\lambda_L(11,12)-\lambda_{L'}(11,12)|=|A_{11}\cdot A_{12}|=0$, 
where $A_1 \cdot A_2$ is the intersection number of $A_1$ and $A_2$.
\end{proof}

\begin{lem} \label{lem:2comp3} 
Let $L$ and $L'$ be 2-component marked links as in Lemma~\ref{lem:2comp2}.
If $[\rho(L)] = [\rho(L')] \neq 0$ in $\mL_0$, then $\lambda_L({11},{12}) = \lambda_{L'}({11},{12})$.
\end{lem}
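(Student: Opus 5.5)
Mirror the proof of Lemma~\ref{lem:2comp2}, now in the partly oriented setting; the hypothesis $[\rho(L)]\neq 0$ is what rules out a degenerate $\chi$-concordance. Write $L=O\cup K_1$ and $L'=O'\cup K_1'$. Since $[\rho(L)]=[\rho(L')]\neq 0$, neither $\rho(L)$ nor $\rho(L')$ is $\chi$-concordant to a knot. Indeed, a marked knot $K$ representing an element of $\mL_0$ is slice by convention, so $[K]=0$. Hence both $\rho(L)$ and $\rho(L')$ are minimizers of their common class, with $c=2$.

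Lemma~\ref{lem:minimal2} then gives a marked-concordance $A_1\cup A'$ from $\rho(L)$ to $\rho(L')$. Here $A_1$ is an oriented annulus joining the marked unknots, and $A'$ is an annulus joining $K_1$ to $K_1'$. The key point is that $A'$ is an annulus, not a M\"obius band: a M\"obius band bounded by $K_1$ alone would be a component bounded by a sublink of $\check{L}$, which Lemma~\ref{lem:minimal1} excludes. The annulus $A'$ is unoriented a priori, but an annulus is always orientable. So we may orient $A'$ so that it restricts to the given orientation of $K_1$, and then $\partial A'$ restricts to some orientation of $K_1'$, possibly the reverse of the given one. Hence $L$ is marked-concordant, as a marked oriented link, to $L'$ or to $L''$, where $L''$ is $L'$ with the unmarked component reversed.

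It remains to observe that $\lambda_{L'}(11,12)=\lambda_{L''}(11,12)$. Reversing $K_1'$ reverses both lifts $K'_{11}$ and $K'_{12}$, and the linking number in $\Sigma_2(O')\cong S^3$ is unchanged when both arguments are reversed. Finally we apply the argument of Lemma~\ref{lem:2comp2} to the marked-concordance from $L$ to $L'$ or $L''$. Take the double branched cover of $S^3\times[0,1]$ over the marked annulus $A_1$; it is a twice-punctured rational homology $4$-sphere, since $A_1$ is a concordance of unknots. In it, the lifts $A'_1,A'_2$ of $A'$ are disjoint and cobound the lifts on the two ends. The difference of the linking numbers of the lifts at the two ends equals the intersection number $A'_1\cdot A'_2$, which is $0$. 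This yields $\lambda_L(11,12)=\lambda_{L'}(11,12)$.

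The main obstacle is the orientation bookkeeping in the second step: checking that the marked-concordance from Lemma~\ref{lem:minimal2} in the partly oriented case is a genuine annulus and can be oriented compatibly. A secondary point is to confirm that the lifted linking number $\lambda$ is invariant under reversing the unmarked component. Both appear to be elementary.
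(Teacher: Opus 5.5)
Your proposal is correct and follows essentially the same route as the paper. Both $2$-component links are minimizers of the nonzero class, so Lemma~\ref{lem:minimal2} yields a marked-concordance whose unmarked annulus may join $K_1$ to $-K_1'$; that case is absorbed by the invariance of $\lambda$ under reversing both lifts, and then the branched-cover argument of Lemma~\ref{lem:2comp2} applies, with your explicit appeal to Lemma~\ref{lem:minimal1} (excluding a pair of M\"obius bands) being a detail the paper leaves implicit.
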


\begin{proof}
Let  $L = O \cup K_1$ and $L' = O \cup K'_1$, where $O$ is the marked component of each.
Since $[L]$ and $[L']$ are  non-zero elements in $\mL_0$, 
$L$ and $L'$ are minimizers.
Thus, we can adopt the same argument as the proof of Lemma~\ref{lem:2comp2}, except for that 
the annulus $A_1$ possibly connects $K_1$ to $-K'_1$. 
Even for that case, we reaches the same conclusion as the original case since 
\[\lambda_{O\cup K'_1}({11},{12}) =\lk(K_{11},K_{12}) = \lk(-{K}_{11}, -{K}_{12})= \lambda_{O\cup(- K'_1)}({11},{12}).\]
\end{proof}

\begin{proof}[Proof of Proposition~\ref{prop:W}]
We first prove the assertion (1).  
Since $P_n(W)$ is a 2-component link, $[P_n(W)] \in \tmL_0$ is prime.  
Thus, by Corollary~\ref{cor:prime}, we only need to show $[P_n(W)] \neq [-P_n(W)^*]$
and $[P_n(W)] \neq [P_m(W)]$ in $\tmL_0$.
Let $L=O\cup K_1$ denote $P_n(W)$.
If  $[P_n(W)] = [-P_n(W)^*]$, then Lemma~\ref{lem:2comp2} shows 
\[
\lambda_{L}(11,12) = \lambda_{-L^*}(11,12).\] 
Since $(S^3,-L^*)\cong (-S^3, -L)$, this homeomorphism induces 
 \[\lambda_{-L^*}(11,12)=-\lk(-K_{11},-K_{12})~(=-\lambda_{L}(11,12)).\]
Hence we have  $\lambda_{L}(11,12) = 0$.

On the other hand, 
for a surface $F$ with $\partial F = O$, and for a basis 
$\alpha = (a_1, a_2)$ illustrated in Figure~\ref{lambda}, we have
\[
G_{\alpha} =
\begin{pmatrix}
0 & -1 \\
-1 & 2
\end{pmatrix}.
\]
Hence, by Remark~\ref{cal:linkage}, 
\[
\lambda_{W}(11,12)
= (1 \;\; 0)
G_{\alpha}^{-1}
\begin{pmatrix}
1 \\ 0
\end{pmatrix}
= (1 \;\; 0)
\begin{pmatrix}
-2 & -1 \\
-1 & 0
\end{pmatrix}
\begin{pmatrix}
1 \\ 0
\end{pmatrix}
= -2.
\]
Furthermore,   
\[
\lambda_{P_n(W)}(11,12)
= (n \;\; 0)
\begin{pmatrix}
-2 & -1 \\
-1 & 0
\end{pmatrix}
\begin{pmatrix}
n \\ 0
\end{pmatrix}
= -2 n^2.
\]
This shows $[P_n(W)] \neq [-P_n(W)^*]$ and also implies that if $n \neq m$ then $[P_n(W)] \neq [P_m(W)]$.

We next prove the assertion (2).
Figure~\ref{cobordism} shows that there is a planar surface in $S^3 \times [0,1]$ from $C_{n,1}(W)$ to a twist knot $K_n$,
where $\sigma(K_n) = 0$ and $\Arf (K_n) \equiv 1 \mod 2$ for any odd $n > 0$. 
Moreover, we have $w(C_{n,1}) = n$. Now, 
it follows from Lemma~\ref{lem:2comp1}, Lemma~\ref{lem:2comp3} and arguments in the previous paragraph
that the assertion (2) holds. 

Finally, one can easily check the assertion (3) by constructing a disjoint union of a disk and a M\"{o}bius band in $B^4$ with boundary $\rho(C^n(W))$. 
\end{proof}

\begin{figure}[htbp]
\begin{center}
\includegraphics[width=.85\linewidth]{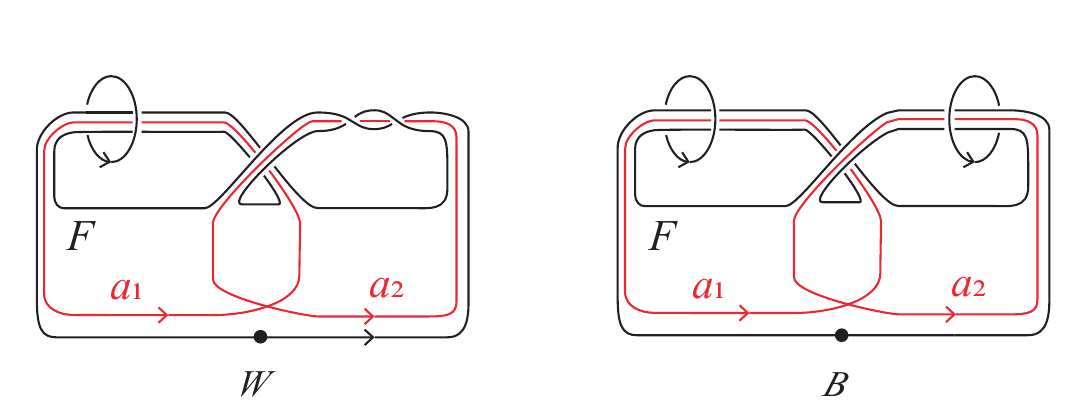}
\caption{Surfaces $F$ bounded by marked components and a basis $a_1, a_2$ of $H_1(F)$}\label{lambda}
\includegraphics[width=.85\linewidth]{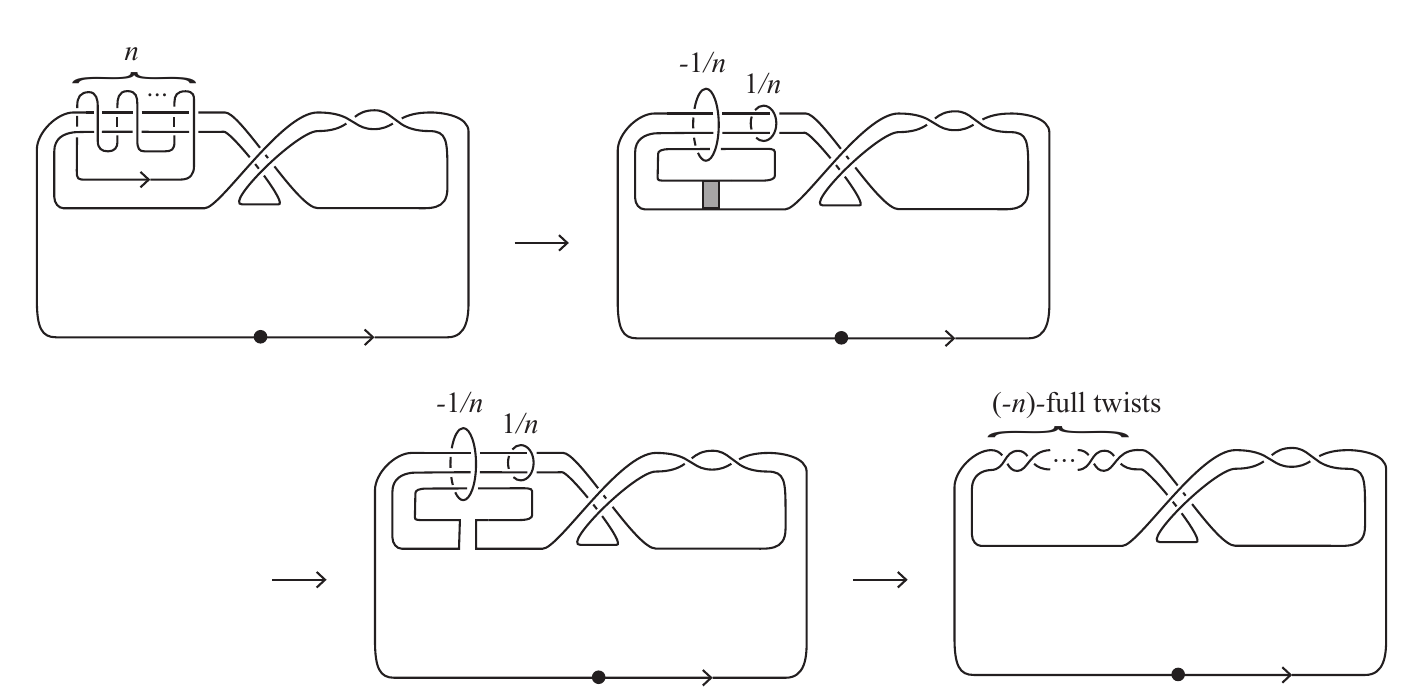}
\caption{A planar surface in $S^3 \times [0,1]$ from $C_{n,1}(W)$ to a twist knot $K_n$}\label{cobordism}
\end{center}
\end{figure}

Next, let us consider the assertion for $B_n$ in Proposition~\ref{prop:example}.
Let $B = O \cup K_1 \cup K_2$ be the Borromean rings with marked component $O$ and 
$P_n \subset S^1 \times D^2$ a pattern with winding number $n$. 
Set $P_n(B) := O \cup P_n(K_1) \cup P_n^*(K_2)$,
where $P_n^*$ is the mirror of $P_n$.
Then we see $C_{n,1}(B) = B_n$, and the assertion for $B_n$ in Proposition~\ref{prop:example} is a consequence of the following proposition.

\begin{prop}
\label{prop:B}
The following assertions hold:
\begin{itemize}
\setlength{\itemsep}{2mm}
\item[(1)] For any $P_n$, we have $2[P_n(B)] = 0$ in $\tmL_0$.

\item[(2)] For any odd $n > 0$, $[C_{n,1}(B)]$ and  $[\rho(C_{n,1}(B))]$ are prime in $\tmL_0$ and 
$\mL_0$, respectively. 
Moreover, $[\rho(C_{2k-1,1}(B))]~(k\in\mathbb{N})$ are mutually distinct in $\mL_0$.
\item[(3)] For any $n \geq 1$, $[\rho(C^n(B))]=0$  in $\mL_0$, and  $[C^n(B)]$ is prime in $\tmL_0$. 
Moreover, $[C^n(B)]~(n\in\mathbb{N})$ are mutually distinct in $\tmL_0$.
\end{itemize}
\end{prop}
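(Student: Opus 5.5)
We prove the three assertions of Proposition~\ref{prop:B} separately, using the primeness criterion of Proposition~\ref{prop:Brunnian}, the obstruction of Lemma~\ref{lem:2comp1}, and a three-component analogue of the linking invariants $\lambda_L$ from Definition~\ref{def:linkage}.

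For (1), we show that $P_n(B)\#P_n(B)$ bounds a disk together with annuli in $B^4$. The key observation is that the Borromean rings are amphichiral in the relevant marked sense. There is an isotopy carrying $-B^*$ to $B$ which fixes the marked component $O$ and interchanges $K_1$ and $K_2$ (with suitable orientations). Because $P_n(B)$ puts the pattern $P_n$ on $K_1$ and its mirror $P_n^*$ on $K_2$, the same symmetry carries $-P_n(B)^*$ to $P_n(B)$. Hence $[-P_n(B)^*]=[P_n(B)]$. Since $-P_n(B)^*$ always represents $-[P_n(B)]$, we get $2[P_n(B)]=0$. The one thing to check is that the symmetry exchanging $K_1$ and $K_2$ really conjugates $P_n$ into $P_n^*$ on the satellites; I expect this to follow from the picture in Figure~\ref{Wn_and_Bn}.

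For (2), the links $C_{n,1}(B)$ and $\rho(C_{n,1}(B))$ are Brunnian. By Proposition~\ref{prop:Brunnian}, primeness therefore reduces to showing they are not equal to any trivial link.
\begin{itemize}
\item In $\mL_0$ every trivial link is $0$, so it suffices to show $[\rho(C_{n,1}(B))]\neq 0$.
\item As in Figure~\ref{cobordism}, I would band $K_1$ to $K_2$ (a planar cobordism in $S^3\times[0,1]$) to obtain a two-component link $O\cup K$. I would then further cap or band this to reach a knot with $\sigma+4\Arf\equiv 4 \bmod 8$; the expected candidate is a twist-type knot with $\sigma=0$ and $\Arf=1$ for odd $n$.
\item Any disk plus annuli or M\"obius bands bounding $\rho(C_{n,1}(B))$ would then glue to a M\"obius band in $B^4$ bounding that knot, contradicting \cite{Yasuhara} as in Lemma~\ref{lem:2comp1}. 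A nonzero image in $\mL_0$ also gives nonzero, non-trivial-link status in $\tmL_0$ via $\rho$.
\end{itemize}
For distinctness, Proposition~\ref{prop:Brunnian} says equal classes are realized by a marked-concordance. Such a concordance lifts to the double branched cover over the disk-annulus, and the argument of Lemmas~\ref{lem:2comp2} and~\ref{lem:2comp3} then shows that the linking numbers $\lambda(1k,2l)$ of lifts are invariants, up to the sign and symmetry ambiguities. I would compute these numbers with Remark~\ref{cal:linkage}, using the Goeritz matrix of Figure~\ref{lambda}; heuristically they scale like $\pm n^2$ times the value for $B$. This separates the values $n=2k-1$.

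For (3), I would construct a disk and M\"obius bands bounding $\rho(C^n(B))$ by hand. The $(2,1)$-cable of an unknotted component bounds a M\"obius band, and iterating this nests the bands, so $[\rho(C^n(B))]=0$. Primeness in $\tmL_0$ again reduces, via Proposition~\ref{prop:Brunnian}, to showing $C^n(B)$ is not equal to a trivial link in $\tmL_0$. I would prove this, and the mutual distinctness in $n$, with the lifted linking numbers $\lambda_{C^n(B)}(1k,2l)$. These behave like $\pm 2^{2n}$ times the base value and are invariant under marked-concordance of oriented links by the Lemma~\ref{lem:2comp2} argument, whereas trivial links give $0$.

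The main obstacle is extending Lemma~\ref{lem:2comp2} to three components, namely checking that the intersection numbers of the lifted annuli vanish in the rational homology ball. A second difficulty is computing $\lambda$ for $B$ with $\lk(K_1,K_2)=0$ and verifying that the result is nonzero.
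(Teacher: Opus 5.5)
Your part (1) is the paper's argument. Your plan for distinctness and for (3) via lifted linking numbers is also the paper's route: it uses $\bar\lambda(1,2)=|\lambda(11,21)|$, which is $1$ for $B$, $n^2$ for $C_{n,1}(B)$ and $4^n$ for $C^n(B)$, and proves its invariance as you suggest in Lemmas~\ref{lem:3comp3} and~\ref{lem:3comp4}.

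The genuine gap is the step all of (2) rests on: showing $[\rho(C_{n,1}(B))]\neq 0$ in $\mL_0$. The Yasuhara argument of Lemma~\ref{lem:2comp1} does not survive the passage to three components.
\begin{itemize}
\item A connected planar surface in $S^3\times[0,1]$ from $O\cup K_1\cup K_2$ to a knot $J$ has four boundary circles, hence $\chi=-2$. Glued to a disk plus an annulus, or to a disk plus two M\"obius bands (total $\chi=1$), it yields a surface with $\chi=-1$ bounded by $J$. That is a punctured torus or punctured Klein bottle, not a M\"obius band.
\item If instead the component of the cobordism containing $J$ misses some component of the link, that sublink is trivial because the link is Brunnian. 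Capping it with disjoint disks then shows $J$ is slice, so $\sigma(J)+4\Arf(J)\equiv 0$.
\end{itemize}
Yasuhara's theorem says nothing about $\chi=-1$ surfaces. The twist knots you have in mind have Seifert genus one, so they really do bound punctured tori. The annulus case therefore cannot be obstructed this way.

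The missing idea is a parity obstruction from the same lifted linking numbers, as in Lemma~\ref{lem:3comp2}.
\begin{itemize}
\item If $\rho(L)$ bounds $D\sqcup A$, lift $A$ to the rational homology ball $\Sigma_2(D)$. The two lifts are disjoint, so $\lk(\partial A_1,\partial A_2)=0$. Combined with $\lambda(11,21)=\lambda(12,22)=-\lambda(11,22)=-\lambda(12,21)$, which follows from the Goeritz formula since $\lk(K_1,K_2)=0$, this gives $2\bar\lambda(1,2)=0$.
\item If $\rho(L)$ bounds $D\sqcup M_1\sqcup M_2$, the lifts $M_{11}$ and $M_{21}$ are disjoint non-orientable surfaces with boundaries $K_{11}$ and $K_{21}$. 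Hence $\lk(K_{11},K_{21})\equiv M_{11}\cdot M_{21}=0 \pmod 2$.
\end{itemize}
Thus $[\rho(L)]=0$ forces $\bar\lambda(1,2)$ to be even, whereas $\bar\lambda_{C_{n,1}(B)}(1,2)=n^2$ is odd for odd $n$. This gives nonvanishing, hence primeness of both classes via Proposition~\ref{prop:Brunnian} and $\rho$. It also makes Lemma~\ref{lem:3comp4} applicable for distinctness.

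The first half of this argument is also what you need in (3). The vanishing $[C^n(B)]=0$ in $\tmL_0$ would be witnessed by a disk plus one annulus joining $K_1$ to $K_2$. It would not be witnessed by a marked-concordance to a trivial link, since neither link is a minimizer of $0$. So ``invariance under marked-concordance, and trivial links give $0$'' does not by itself rule it out. The direct lifting computation of Lemma~\ref{lem:3comp1} does.
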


To prove Proposition~\ref{prop:B}, we first show lemmas for 3-component links.

\begin{dfn}\label{def:linkage2}
Let $L= O \cup K_1\cup\cdots\cup K_n$ be an $(n+1)$-component marked oriented link as  
in Definition~\ref{def:linkage}, i.e., 
the marked component $O$ is an unknot and $\lk (O, K_i)$ is even for any $i =1,...,n$.
Furthermore, suppose that $\lk(K_i,K_j)=0$ for some $i \neq j$. 
Then by Remark~\ref{cal:linkage}, we have 
$\lambda(i1,j1)=\lambda(i2,j2)=-\lambda(i1,j2)=-\lambda(i2,j1)$.  
Hence we define $\bar{\lambda}(i,j)$ as
\[\bar{\lambda}(i,j):=|\lambda(i1,j1)| .\]
\end{dfn}

In the following lemmas, we consider Brunnian links. We stress that 
Brunnian links satisfy the 
condition in the definition above.

\begin{lem}
\label{lem:3comp1}
If $L$ is Brunnian and $[L] = 0$ in $\tmL_0$, then
\[
\bar{\lambda}_L(1,2)=0.\]
\end{lem}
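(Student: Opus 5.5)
If $[L]=0$ in $\tmL_0$, the plan is to produce a $\chi$-concordance from the unknot to $L = O\cup K_1\cup K_2$. This is a surface $F = D\cup A$ in $B^4$. Here $D$ is a slice disk for $O$, and $A$ has $\chi(A)=0$ with no disk components. Since $L$ has three components and the unmarked part $K_1\cup K_2$ has only two, $A$ must be a single oriented annulus with $\partial A = K_1\cup K_2$. (In the marked oriented case Möbius bands are excluded, and two annuli would need four boundary circles.) Thus $K_1$ and $K_2$ cobound an annulus $A$ in $B^4$ disjoint from the slice disk $D$ of $O$. Orientation conventions force $\partial A = K_1 \cup (-K_2)$ or similar, which only affects signs, and that is harmless because we take absolute values.

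Next, pass to the double branched cover $\Sigma_2(D)$ of $B^4$ branched along $D$. Since $O$ is an unknot bounding the disk $D$, this cover is a rational homology ball; for a slice disk it is in fact a $\Z/2$-homology ball. Its boundary is $\Sigma_2(O)\cong S^3$. The annulus $A$ misses $D$, so it lifts to two annuli $A^{(1)}, A^{(2)}$ with $\partial A^{(k)} = K_{1k}\cup (\pm K_{2k})$, up to the ambiguity of which lift of $K_2$ is paired with which lift of $K_1$. In a rational homology ball, the linking number of two disjoint null-homologous links in the boundary equals the algebraic intersection number of surfaces they bound. The lifts $A^{(1)}$ and $A^{(2)}$ are disjoint, so
\[
\lk\bigl(K_{11}\cup (\pm K_{21}),\, K_{12}\cup(\pm K_{22})\bigr)=A^{(1)}\cdot A^{(2)} = 0 .
\]
If the pairing instead puts $K_{11}$ with $K_{22}$, the same argument applies with the indices swapped.

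Then expand this linking number bilinearly. It becomes $\lambda(11,12)\pm\lambda(11,22)\pm\lambda(21,12)+\lambda(21,22)$, with signs determined by orientations. Because $L$ is Brunnian, $O\cup K_1$ and $O\cup K_2$ are split, so by the remark after Definition~\ref{def:linkage} we get $\lambda(11,12)=\lambda(21,22)=0$. By Definition~\ref{def:linkage2}, $\lambda(11,22)=\lambda(21,12)=-\lambda(11,21)$, so the expression collapses to $\pm 2\lambda(11,21)$ or to $0$, depending on the signs. In the nonvanishing case this forces $\bar\lambda_L(1,2)=|\lambda(11,21)|=0$.

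The main obstacle is the sign bookkeeping: I must make sure the two cross terms add rather than cancel. I would handle it by using the covering involution $\tau$, which swaps $A^{(1)}$ and $A^{(2)}$ and satisfies $\tau(K_{ik})=K_{i,3-k}$. The surface $A^{(2)}=\tau A^{(1)}$ carries the same orientation pattern as $A^{(1)}$, so both cross terms carry the same sign $\epsilon$ coming from $\partial A$. Concretely, if $\partial A^{(1)} = K_{11}+\epsilon K_{21}$, then the cross terms are $\epsilon\lambda(11,22)+\epsilon\lambda(21,12)=-2\epsilon\lambda(11,21)$, while the diagonal terms vanish. The remaining work is to check that the intersection-equals-linking formula holds in $\Sigma_2(D)$, which needs $H_1$ and $H_2$ to vanish rationally. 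This follows from the standard fact that the double branched cover of $B^4$ over a slice disk is a $\Z/2$-homology ball, and the paper's proof of Lemma~\ref{lem:2comp2} already uses this.
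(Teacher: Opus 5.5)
Your proposal is correct and follows the paper's proof essentially step for step: the single annulus $A$ disjoint from the slice disk $D$, its two disjoint lifts in the rational homology ball $\Sigma_2(D)$ giving $\lk(\partial A^{(1)},\partial A^{(2)})=A^{(1)}\cdot A^{(2)}=0$, and the Brunnian vanishing of the diagonal terms, which leaves $\pm 2\lambda(11,21)$. Your use of the covering involution to show that the two cross terms carry the same sign is more explicit than the paper. One small correction: the preimage of $A$ splits into two annuli because $\lk(O,K_1)$ is even, so the core of $A$ lifts; the fact that $A$ misses $D$ is not enough on its own.
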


\begin{proof}
Since $L$ is Brunnian,  for each $i$,  
$O\cup K_i$ is a trivial link, and hence  $\lambda(i1,i2)=0$.

If $[L] = 0$ in $\tmL_0$, then $O$ and $K_1 \cup K_2$ bounds a disk $D$ and annulus $A$ in 
$B^4$ respectively, which are mutually disjoint. 
Moreover, 
there are two lifts ${A}_1$, ${A}_2$ of $A$ in the double branched cover 
$\Sigma_2(D)$ of $B^4$ branched over $D$.
Note that $\Sigma_2(D)$ is a rational homology 4-ball, and hence
$\lk (\partial {A}_1, \partial {A}_2) = {A}_1 \cdot {A}_2 = 0$.

Since $(\partial A_1, \partial A_2) = (K_{11} \cup K_{21}, K_{12} \cup K_{22})$
or $(K_{11} \cup K_{22}, K_{12} \cup K_{21})$, we have
\[
\mathrm{lk}(\partial A_1, \partial A_2)
= \lambda_L(11,21) + \lambda_L(12,22)
\quad \text{or} \quad
= \lambda_L(11,22) + \lambda_L(12,21).
\]
As we mentioned in Definition~\ref{def:linkage2},
\[
\left| \lambda_L(11,21) + \lambda_L(12,22) \right|
=
\left| \lambda_L(11,22) + \lambda_L(12,21) \right|=2\bar{\lambda}_L(1,2).
\]
This completes the proof. \end{proof}

\begin{lem}
\label{lem:3comp2}
If $L$ is Brunnian and $[\rho(L)] = 0$ in $\mL_0$, then 
$\bar{\lambda}(1,2)$ is even.
\end{lem}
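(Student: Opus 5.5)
The plan is to imitate the proof of Lemma~\ref{lem:3comp1}, replacing the oriented annulus by the unoriented surfaces allowed in $\mL_0$ and working modulo $2$.

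\emph{Setting up the surfaces.} Suppose $[\rho(L)]=0$ in $\mL_0$, where $L=O\cup K_1\cup K_2$. Then $O$ bounds a disk $D$ in $B^4$, and $K_1\cup K_2$ bounds an unoriented surface $F$ disjoint from $D$ with $\chi(F)=0$ and no closed or disk components. Since each component of $F$ meets $\partial B^4$, there are exactly two possibilities:
\begin{itemize}
\item $F$ is a single annulus $A$ joining $K_1$ to $K_2$;
\item $F=M_1\sqcup M_2$, where $M_i$ is a M\"obius band with $\partial M_i=K_i$.
\end{itemize}
A single M\"obius band cannot occur, because it would leave one of the $K_i$ without a bounding component. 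As in Lemma~\ref{lem:3comp1}, $O\cup K_i$ is trivial for each $i$, so $\lambda(i1,i2)=0$ and $\lk(O,K_i)=0$. We work in the double branched cover $\Sigma_2(D)$, which is a rational homology ball with $|H_1(\Sigma_2(D))|=|\det(\partial D)|$. Since $\partial D=O$ is the unknot, $H_1(\Sigma_2(D);\Z)$ has odd order, so $\Sigma_2(D)$ is also a $\Z/2\Z$-homology ball.

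\emph{Case 1: an annulus.} An annulus is orientable. Orienting $A$ gives boundary $K_1\cup\epsilon K_2$ with $\epsilon=\pm1$. The lifting argument of Lemma~\ref{lem:3comp1} then applies. The cover splits over $A$, since the meridian class of $D$ restricts trivially on $A$ because $\lk(O,K_i)=0$. This yields
\[
0=A_1\cdot A_2=\pm\bigl(\lambda_L(11,2k)+\epsilon\,\lambda_L(12,2k')\bigr)
\]
for the appropriate $\{k,k'\}=\{1,2\}$. By Definition~\ref{def:linkage2}, $\lambda(11,21)=\lambda(12,22)=-\lambda(11,22)=-\lambda(12,21)$. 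Hence the right-hand side has absolute value either $0$ or $2\bar\lambda_L(1,2)$, depending on $\epsilon$ and on the pairing. In the first situation we get no information directly, but we can still reduce mod~$2$ as in Case~2: the two lifts $A_1,A_2$ are disjoint, and the mod-$2$ argument below applies to the pair of boundary components $K_{11},K_{21}$. In the second situation $\bar\lambda(1,2)=0$. Either way $\bar\lambda(1,2)$ is even.

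\emph{Case 2: two M\"obius bands.} First I will check that each $M_i$ lifts to two disjoint copies $M_{i1},M_{i2}$ in $\Sigma_2(D)$. The core $c_i$ of $M_i$ satisfies $2[c_i]=[K_i]$ in $H_1(B^4\setminus D)\cong\Z$. Since $\lk(O,K_i)=0$, this forces $[c_i]=0$, so the covering is trivial over $M_i$. The lifts are labelled so that $\partial M_{ik}=K_{ik}$.

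The key step is the mod-$2$ linking formula. In a $\Z/2\Z$-homology ball $W$, if knots $J_1,J_2\subset\partial W$ bound disjoint, possibly nonorientable, surfaces $S_1,S_2$, then $\lk(J_1,J_2)\equiv S_1\cdot S_2\equiv 0\pmod 2$. I would prove this by pushing a Seifert surface of $J_1$ into $W$ and gluing it to $S_1$. The result is a closed mod-$2$ cycle, which is null-homologous because $H_2(W;\Z/2\Z)=0$. Its mod-$2$ intersection with $S_2$ equals the sum of the mod-$2$ count of Seifert surface of $J_1$ with $J_2$, which is $\lk(J_1,J_2)$, and $S_1\cdot S_2=0$.

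Applying this to $M_{11}$ and $M_{21}$ gives $\lambda_L(11,21)\equiv 0\pmod 2$, so $\bar\lambda_L(1,2)=|\lambda_L(11,21)|$ is even.

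\emph{Main obstacle.} I expect the main difficulty to be this mod-$2$ intersection argument for nonorientable surfaces, together with the claim that $\Sigma_2(D)$ has trivial $\Z/2\Z$-homology. The latter comes from $\det(O)=1$, which makes $H_*(\Sigma_2(D);\Z)$ that of a point up to odd torsion.
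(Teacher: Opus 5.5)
Your overall route is the paper's. You split into the annulus case and the two-M\"obius-band case, treat the first as in Lemma~\ref{lem:3comp1}, and treat the second by a mod-$2$ intersection count of the lifts $M_{11},M_{21}$ in $\Sigma_2(D)$. Case~2 is correct, and your justifications of the lifting and of the mod-$2$ linking formula are more careful than the paper's.

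\textbf{The gap is in Case~1.} Your expression $\pm\bigl(\lambda_L(11,2k)+\epsilon\,\lambda_L(12,2k')\bigr)$ for $A_1\cdot A_2$ is miscomputed. The term $\lambda_L(11,2k)$ is the linking of two curves on the \emph{same} lift $A_1$, so it is not a term of $\lk(\partial A_1,\partial A_2)$ at all. The correct computation runs as follows. Take $\partial A_1=K_{11}\cup\epsilon K_{2k}$ and $\partial A_2=K_{12}\cup\epsilon K_{2k'}$. The terms $\lambda_L(11,12)$ and $\lambda_L(21,22)$ vanish because $L$ is Brunnian. \emph{Both} cross terms carry the factor $\epsilon$, so
\[
0=A_1\cdot A_2=\epsilon\bigl(\lambda_L(11,2k')+\lambda_L(12,2k)\bigr)=\pm 2\lambda_L(11,21).
\]
The last equality follows from the sign relations in Definition~\ref{def:linkage2}, for either value of $\epsilon$ and either pairing. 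Hence your ``first situation'' never occurs, and the annulus case gives $\bar\lambda(1,2)=0$ outright. This is exactly the remark the paper makes about the possibility $\partial A=K_1\cup -K_2$.

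As written, your treatment of that nonexistent situation would not work anyway. The mod-$2$ lemma requires $K_{11}$ and $K_{21}$ to bound \emph{disjoint} surfaces in $\Sigma_2(D)$. But each lift $A_j$ is an annulus with one lift of $K_1$ and one lift of $K_2$ in its boundary, so neither $K_{11}$ nor $K_{21}$ bounds anything available on its own. Applying the lemma to $\partial A_1$ and $\partial A_2$ only gives $2\bar\lambda(1,2)\equiv 0 \pmod 2$, which is vacuous. Replace that paragraph by the computation above.

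A smaller point: the formula $|H_1(\Sigma_2(D))|=|\det(\partial D)|$ is false in general. Locally knotting $D$ with a spun trefoil gives $H_1\cong\Z/3\Z$. What you actually use is that the double branched cover of $B^4$ along a disk is a $\Z/2\Z$-homology ball, and that is a standard true fact.
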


\begin{proof}
If $[\rho(L)] = 0$ in $\mL_0$, then at least one of the following holds:
\begin{itemize}
\setlength{\itemsep}{2mm}
\item
$L$ bounds a disjoint union of  a disk $D$ and an annulus $A$, or
\item
$L$ bounds a disjoint union of a disk $D$ and two M\"{o}bius bands $M_1$, $M_2$.
\end{itemize}
For the first case, we can adopt the same argument as the proof of Lemma~\ref{lem:3comp1},
except for that the boundary of the annulus $A$ is possibly $K_1 \cup -K_2$.
Even for the case, we have 
\[2\bar{\lambda}(1,2)=|\lambda_L(11,21)+\lambda_L(12,22)|=|\lambda_L(11,22)+\lambda_L(12,21)|=0.\]

Next, for the second case, there are two lifts ${M}_{i1}$, ${M}_{i2}$ of $M_i$ in $\Sigma_2(D)$ for each $i \in \{1,2\}$.
Without loss of generality, we may assume that $\partial {M}_{ik} = K_{ik}$. Now,
since $\Sigma_2(D)$ is a rational homology 4-ball, we have
\[
\bar{\lambda}(1,2)=|\lk(K_{11}, K_{21})| = |\lk(\partial{M}_{11}, \partial {M}_{21})| \equiv 
|{M}_{11} \cdot {M}_{21}| = 0 \mod 2.
\]
\end{proof}

The following two lemmas can be proved similarly to Lemmas~\ref{lem:2comp2} and~\ref{lem:2comp3} 
respectively.

\begin{lem}
\label{lem:3comp3}
If $L$ and $L'$ are Brunnian and $[L] = [L'] \neq 0$ in $\tmL_0$, then 
$\bar{\lambda}_L(1,2)=\bar{\lambda}_{L'}(1,2)$. 
\end{lem}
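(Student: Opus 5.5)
The plan follows the proof of Lemma~\ref{lem:2comp2}, with the two-lift intersection argument replaced by the one used in Lemma~\ref{lem:3comp1}. Write $L=O\cup K_1\cup K_2$ and $L'=O'\cup K_1'\cup K_2'$. Since both are Brunnian and $[L]=[L']\neq 0$, the first part of the proof of Proposition~\ref{prop:Brunnian} shows that each is a minimizer of the common class. Lemma~\ref{lem:minimal2} then turns a $\chi$-concordance from $L$ to $L'$ into a marked-concordance $A_0\cup A_1\cup A_2\subset S^3\times[0,1]$. Here $A_0$ joins $O$ to $O'$, and $A_1,A_2$ are oriented annuli joining $K_1,K_2$ to $K'_{\sigma(1)},K'_{\sigma(2)}$ for some permutation $\sigma$. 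Because $\bar\lambda(1,2)=\bar\lambda(2,1)$ by the symmetry of $\lambda$, the permutation does not matter.

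Next pass to the double branched cover $\Sigma_2(A_0)$ of $S^3\times[0,1]$ over $A_0$. As in Lemma~\ref{lem:2comp2}, it is a rational homology $S^3\times[0,1]$, and its two boundary ends are $\Sigma_2(O)\cong S^3$ and $\Sigma_2(O')\cong S^3$. Each $A_i$ lifts to two disjoint annuli $A_{i1},A_{i2}$. Their ends are lifts $K_{ik}$ of $K_i$ and lifts of $K'_{\sigma(i)}$, and the lift labels at the two ends match up to a global swap depending on which sheet each surface $F$ determines. Take the $2$-cycle-with-boundary $A_{11}$ and the disjoint surface $A_{21}$. In the rational homology cobordism, the difference of the linking numbers of their boundaries at the two ends equals the algebraic intersection $A_{11}\cdot A_{21}=0$. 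Therefore
\[
\lambda_L(11,21)=\pm\lambda_{L'}(1'k,2'l)
\]
for suitable lift indices $k,l$, with the sign accounting for orientation conventions on the $L'$ end. Definition~\ref{def:linkage2} shows that all four values $\lambda(ik,jl)$ agree up to sign. Taking absolute values therefore gives $\bar\lambda_L(1,2)=\bar\lambda_{L'}(1,2)$.

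The main obstacle is justifying the cobordism invariance of linking numbers of lifts in $\Sigma_2(A_0)$. One has to check that $\Sigma_2(A_0)$ is indeed a rational homology cylinder, which follows from the double branched cover of $S^3\times I$ over a concordance of unknots by the standard Smith-theory/transfer argument already used in Lemma~\ref{lem:2comp2}. One also has to check that the lifts of $K_i$ relative to the chosen surfaces $F$ are exactly the boundary pieces of $A_{i1},A_{i2}$ up to relabelling. The absolute value in $\bar\lambda$ is designed precisely to absorb the resulting labelling and orientation ambiguities, so once these two points are in place the equality follows.
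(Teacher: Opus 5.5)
Your proposal is correct and matches the paper's proof, which is given only as "similarly to Lemma~\ref{lem:2comp2}": both Brunnian links are minimizers, Lemma~\ref{lem:minimal2} yields a marked-concordance, and because the lifts of the unmarked annuli in the rational homology cylinder $\Sigma_2(A_0)$ are disjoint, the linking numbers of the lifted boundaries at the two ends agree up to sign, with the absolute value in $\bar\lambda$ absorbing labelling and orientation ambiguities. One small thing to make explicit: Proposition~\ref{prop:Brunnian} assumes "not equal to any trivial link", and $[L]\neq 0$ implies this here because in $\tmL_0$ a $3$-component link is only $\chi$-concordant to links with an odd number of components, and every trivial link with an odd number of components represents $0$.
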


\begin{lem}
\label{lem:3comp4}
If $L$ and $L'$ are Brunnian and $[\rho(L)] = [\rho(L')] \neq 0$ in $\mL_0$, then 
$\bar{\lambda}_L(1,2)=\bar{\lambda}_{L'}(1,2)$. 
\end{lem}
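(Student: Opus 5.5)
The plan for Lemma~\ref{lem:3comp4} is to follow the proof of Lemma~\ref{lem:3comp3}, modelled on Lemmas~\ref{lem:2comp2} and~\ref{lem:2comp3}. We may take $L=O\cup K_1\cup K_2$ and $L'=O'\cup K'_1\cup K'_2$ as 3-component Brunnian links. By Proposition~\ref{prop:Brunnian}, applied in $\mL_0$ to $\rho(L)$ and $\rho(L')$, both are minimizers of the common nonzero class. The minimizer claim needs $[\rho(L)]$ to differ from every trivial link in $\mL_0$. Every trivial link represents $0$ in $\mL_0$, so this follows from $[\rho(L)]\neq 0$.

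Lemma~\ref{lem:minimal2} then gives a marked-concordance $A_0\cup A_1\cup A_2$ in $S^3\times[0,1]$ from $\rho(L)$ to $\rho(L')$. Here $A_0$ is an annulus joining $O$ to $O'$, and $A_1$, $A_2$ are unoriented annuli. Each $A_i$ joins $K_i$ to $\pm K'_{\pi(i)}$ for some permutation $\pi$ of $\{1,2\}$. Since $A_i$ is an annulus, it is orientable. Choosing the orientation that restricts to $K_i$, the other end is $K'_{\pi(i)}$ or $-K'_{\pi(i)}$.

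Next I would pass to the double branched cover $\Sigma_2(A_0)$ of $S^3\times[0,1]$ along $A_0$. This cover has boundary $\Sigma_2(O)\sqcup\Sigma_2(O')\cong S^3\sqcup S^3$ and is a rational homology $S^3\times[0,1]$, as in the proof of Lemma~\ref{lem:2comp2}. Each $A_i$ is disjoint from $A_0$, so it lifts to two disjoint annuli $A_{i1}$ and $A_{i2}$. Their boundaries are $K_{i1}\cup \pm K'_{\pi(i)k}$ and $K_{i2}\cup \pm K'_{\pi(i)k'}$, where $\{k,k'\}=\{1,2\}$.

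Now take the annulus $\widetilde A_1=A_{11}$ and the union $\widetilde A_2=A_{21}\cup A_{22}$, or a single lift. These are disjoint in a rational homology cobordism, so linking numbers are preserved. Concretely, $\lambda_L(11,2l)=\pm\lambda_{L'}(\pi(1)k,\pi(2)l')$, with the sign coming from the orientation choices. Taking absolute values and using the symmetry relations of Definition~\ref{def:linkage2}, namely $|\lambda(i1,j1)|=|\lambda(i1,j2)|=\bar\lambda(i,j)$, gives $\bar\lambda_L(1,2)=\bar\lambda_{L'}(\pi(1),\pi(2))=\bar\lambda_{L'}(1,2)$, since $\bar\lambda$ is symmetric.

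The step I expect to be delicate is justifying that linking numbers of the two ends are equal across the cobordism. I would argue as follows. The difference $\lk(\partial_0\widetilde A_1,\partial_0\widetilde A_2)-\lk(\partial_1\widetilde A_1,\partial_1\widetilde A_2)$ equals the intersection number $\widetilde A_1\cdot\widetilde A_2$ after capping off in a rational homology ball, and this number is $0$ because the surfaces are disjoint. This is exactly the argument already used in Lemma~\ref{lem:2comp2}, now applied to distinct components.

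The new feature relative to Lemma~\ref{lem:3comp3} is bookkeeping only. Orientation reversals on $A_i$ flip signs, and the lift labelling in $M_1$ versus $M_2$ may swap. Both are absorbed by the absolute value together with $|\lambda(i1,j1)|=|\lambda(i1,j2)|$. The same holds for a possible swap $\pi$ of $K_1$ and $K_2$, as in Lemma~\ref{lem:2comp3}.
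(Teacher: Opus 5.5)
Your proof is correct and is the argument the paper intends; the paper only says the lemma is proved ``similarly to Lemma~\ref{lem:2comp3}'', and you carry that out: minimality via Proposition~\ref{prop:Brunnian}, a marked-concordance via Lemma~\ref{lem:minimal2}, and vanishing intersection of the disjoint lifts $A_{11}$, $A_{2l}$ in the rational homology cobordism $\Sigma_2(A_0)$, with the orientation, lift-labelling and permutation ambiguities absorbed by the absolute value. One small clean-up: take $\widetilde A_2$ to be a single lift, since the union $A_{21}\cup A_{22}$ only gives the uninformative identity $\lambda(11,21)+\lambda(11,22)=0$.
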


\begin{proof}[Proof of Proposition~\ref{prop:B}]
Figure~\ref{isotopy} describes an isotopy from $B$ to $-B^*$ which sends the triple $(O,K_1,K_2)$ 
to $(-O^*,-K^*_2, -K^*_1)$.
By taking satellite operations, we also have an isotopy from $P_n(B)$ to $-P_n(B)^*$, 
which proves the assertion (1).

To prove the assertions (2) and (3), let us compute $\bar{\lambda}(1, 2)$ for any $P_n(B)$.
For a surface $F$ with $\partial F = O$, and for a basis 
$\alpha = (a_1, a_2)$ illustrated in Figure~\ref{lambda}, we have
\[
G_{\alpha} =
\begin{pmatrix}
0 & -1 \\
-1 &0
\end{pmatrix}.
\]
Hence, by Remark~\ref{cal:linkage}, 
\[\bar{\lambda}_B(1, 2)=
|\lambda_{B}(11,21)|
=\left| (1 \;\; 0)
G_{\alpha}^{-1}
\begin{pmatrix}
0 \\ -1
\end{pmatrix}\right|
= \left|(1 \;\; 0)
\begin{pmatrix}
0 & -1 \\
-1 & 0
\end{pmatrix}
\begin{pmatrix}
0 \\ -1
\end{pmatrix}\right|
= 1.
\]
Furthermore,   
\[\bar{\lambda}_{P_n(B)}(1, 2)=
|\lambda_{P_n(B)}(11,21)|
= \left|(n \;\; 0)
\begin{pmatrix}
0 & -1 \\
-1 & 0
\end{pmatrix}
\begin{pmatrix}
0 \\ -n
\end{pmatrix}\right|
= n^2.
\]
Now, since $w(C_{n,1}) = n$ and $w(C^n) = 2^n$, the assertions (2) and (3) directly follows from Lemmas~\ref{lem:3comp1}, 
\ref{lem:3comp2}, \ref{lem:3comp3} and \ref{lem:3comp4}.
(Note that the primeness follows from Proposition~\ref{prop:Brunnian} and the fact that $C_{n,1}(B)$ and $C^n(B)$ are Brunnian for any $n$.)
\end{proof}

\begin{figure}[htbp]
\begin{center}
\includegraphics[width=.85\linewidth]{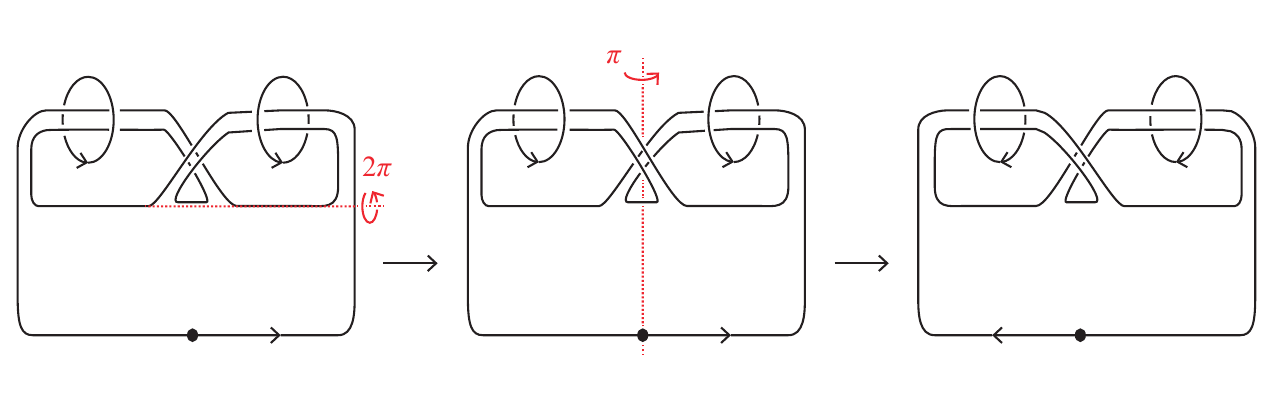}
\caption{An isotopy from $B$ to $-B^*$}\label{isotopy}
\end{center}
\end{figure}

\begin{proof}[Proof of Proposition~\ref{prop:example}]
Since $W_n = C_{n,1}(W)$, $B_n = C_{n,1}(B)$ and $w(C_{n,1}) =n$, 
Proposition~\ref{prop:example} immediately follows from 
Propositions~\ref{prop:W} and \ref{prop:B}.
\end{proof}

\begin{proof}[Proof of Theorem~\ref{thm:main}]
This immediately follows from Theorem~\ref{thm:structure} and Proposition~\ref{prop:example}.
\end{proof}

\begin{proof}[Proof of Theorem~\ref{thm:ker}]
By Theorem~\ref{thm:structure}, we have a direct decomposition
\[
\Ker \rho = (\Ker \rho \cap \tmL_\infty) \oplus (\Ker \rho  \cap \tmL_2)
\]
such that
\begin{itemize}
\setlength{\itemsep}{2mm}
\item
$\Ker \rho \cap \tmL_\infty$ is an free abelian group containing  
$\{C^n(W)~|~n\in\mathbb{N}\}$, and
\item
$\Ker \rho \cap \tmL_2$ is an $\Z/2\Z$-vector space containing  $\{C^n(B)~|~n\in\mathbb{N}\}$.
\end{itemize}
Since $\{C^n(W)~|~n\in\mathbb{N}\}$ (resp.\  $\{C^n(B)~|~n\in\mathbb{N}\}$) is a linearly independent set of 
$\Ker \rho \cap \tmL_\infty$ (resp.\ $\Ker \rho \cap \tmL_2$), 
the equality $\Ker \rho \cong \Z^\infty \oplus (\Z/2\Z)^\infty$ holds.

Next, denote by $\tmL_B$  the subgroup  of $\tmL_0$ generated by $\Ker \rho \cap \mB$,
and let us prove that $\tmL_B$  is a direct summand of $\tmL_0$ isomorphic to $\Z^\infty \oplus (\Z/2\Z)^\infty$.
It follows from Proposition~\ref{prop:Brunnian} that $\mB$ is contained in the set of prime elements in $\tmL$.
Therefore, by Theorem~\ref{thm:structure}, we see that
$\tmL_B \cap \tmL_\infty$ (resp.\ $\tmL_B \cap \tmL_2$) is a direct summand of $\tmL_\infty$ (resp.\ $\tmL_2$)
with a basis $\Ker \rho \cap \mB \cap \tmP_\infty$ (resp.\ $\Ker \rho \cap \mB \cap \tmP_2$).
Moreover, we note that
\begin{itemize}
\setlength{\itemsep}{2mm}
\item
$\Ker \rho  \cap \mB \cap \tmP_\infty$ contains $\{\varepsilon(n) C^n(W)~|~n\in\mathbb{N}\}$ for some $\varepsilon \colon \mathbb{N} \to \{\pm 1\}$, and
\item
$\Ker \rho \cap \mB \cap \tmP_2$ contains $\{C^n(B)~|~n\in\mathbb{N}\}$.
\end{itemize}
As a conclusion, we can say that $\tmL_B$ is a direct summand of $\tmL_0$ and
\[
\tmL_B = (\tmL_B \cap \tmL_\infty) \oplus (\tmL_B \cap \tmL_2) \cong \Z^\infty \oplus (\Z/2\Z)^\infty.
\]
\end{proof}


\begin{thebibliography}{00}

\bibitem{DO} A.\ Donald and B.\ Owens, {\it Concordance groups of links},  Alg.\ Geom.\ Topol.\ {\bf 12}, 2069--2093 (2012). 
https://doi.org/10.2140/agt.2012.12.2069

\bibitem{PY} J.\ H.\ Przytycki and A.\ Yasuhara,
 {\it Linking numbers in rational homology 3-spheres, cyclic branched covers and infinite cyclic covers},
{Trans. Amer Math. Soc.}
{\bf 365}, 3669--3685 (2004). 
https://doi.org/10.1090/S0002-9947-04-03423-3

\bibitem{G} L.\ Goeritz, {\it Knoten und quadratische Formen}, Math.\ Z.\ {\bf 36}, 647-- 654 (1933). 
https://doi.org/10.1007/BF01188642

\bibitem{GL} C.\ McA.\ Gordon and R.\ A.\ Litherland, 
{\it On the signature of a link}, Invent.\ Math.\ {\bf 47}, 53--69  (1978). 
https://doi.org/10.1007/BF01609479
 

\bibitem{Yasuhara} A.\ Yasuhara, {\it Connecting lemmas and representing homology classes of simply connected 4-manifolds}, Tokyo J.\ Math.\ {\bf 19}, no.\ 1, 245--261 (1996). 
https://doi.org/10.3836/tjm/1270043232

\end{thebibliography}
\end{document}